\title{%
	\textsc{%
		Equality case in van der Corput's inequality
		and collisions in multiple lattice tilings
	}
}
\author{%
	Gennadiy Averkov\footnote{Faculty of Mathematics, Otto-von-Guericke-Universit\"at Magdeburg, Universit\"atsplatz 2, 39106 Magdeburg, Germany. Email: averkov@ovgu.de}%
}
\newcommand{\mult}{\operatorname{mult}}
\newcommand{\dd}{\operatorname{d}}
\newcommand{\rmcmd}[1]{\mathop{\mathrm{#1}}\nolimits}
\newcommand{\dotvar}{\,\cdot\,}
\newcommand{\R}{\mathbb{R}}
\newcommand{\N}{\mathbb{N}}
\newcommand{\Z}{\mathbb{Z}}
\newcommand{\floor}[1]{\left\lfloor #1 \right\rfloor}
\newcommand{\cK}{\mathcal{K}}
\newcommand{\cT}{\mathcal{T}}
\newcommand{\cKo}{\cK_o}
\newcommand{\chf}{\mathbbm{1}}
\newcommand{\setcond}[2]{\left\{#1 \, : \, #2 \right\}}
\newcommand{\vol}{\rmcmd{vol}}
\newcommand{\intr}[1]{\operatorname{int}(#1)}
\newcommand{\cl}[1]{\operatorname{cl}(#1)}
\colorlet{myGreen}{green!40!white}
\newtheorem{nn}{}
\newtheorem{theorem}[nn]{Theorem}
\newtheorem{proposition}[nn]{Proposition}
\newtheorem{lemma}[nn]{Lemma}
\newtheorem{corollary}[nn]{Corollary}
\newtheorem*{acknowledgments*}{Acknowledgments}
\begin{document}

\maketitle

\begin{abstract}	
	Van der Corput's provides the sharp bound $\vol(C) \le m 2^d$ on the volume of a $d$-dimensional origin-symmetric convex body $C$ that has $2m-1$ points of the integer lattice in its interior. For $m=1$, a characterization of the equality case $\vol(C)= m 2^d$ is equivalent to the well-known problem of characterizing tilings by translations of a convex body. It is rather surprising that so far, for $m \ge 2$, no characterization of the equality case has been available, though a hint to the respective characterization problem can be found in the 1987 monograph of Gruber and Lekkerkerker. We give an explicit characterization of the equality case for all $m \ge 2$. Our result reveals that, the equality case for $m \ge 2$ is more restrictive than for $m=1$. We also present consequences of our characterization in the context of multiple lattice tilings.
\end{abstract}

\section{Introduction}

Let $d \in \N$. We define a \emph{convex body} as a $d$-dimensional compact convex subset of $\R^d$.
By $\cK^d$ and $\cKo^d$ we denote the family of convex bodies in $\R^d$ and its subfamily consisting of convex bodies centrally symmetric with respect to the origin, repsectively. A set of the form $\Lambda:=\setcond{z_1 u_1 + \cdots + z_k u_k}{z_1,\ldots,z_k \in \Z}$, where $u_1,\ldots,u_k \in \R^d$ are linearly independent, is called a \emph{lattice} of rank $k $, while the $k$-dimensional volume of $\setcond{x_1 u_1 + \cdots + x_k u_k}{x_1,\ldots,x_k \in [0,1]}$ is called the \emph{determinant} of $\Lambda$ and is denoted by $\det(\Lambda)$. A large part of geometry of numbers studies properties of $\cK^d$ and $\cKo^d$ related to lattices. For more on the background, we refer to the monographs \cite{MR893813} and \cite{MR1434478}. In the context of this paper, one can fix the underlying lattice to be the the integer lattice $\Z^d$. We will refer to the elements of $\Z^d$ as \emph{lattice points} or \emph{lattice vectors}. Since every $K \in \cKo^d$ has a positive odd number of interior lattice points,  $\cKo^d$ can be decomposed into disjoint union of families $\cKo^d(2m-1)$, with $m \in \N$, where $\cKo^d(2m-1)$ consists of convex bodies  $K  \in \cKo^d$ that have $2m-1$ interior lattice points. 

\emph{Van der Corput's inequality} is the following useful relation between the volume and the number of interior lattice points in $\cKo^d$: 
\begin{align}
	\label{vdc:ineq}
	\vol(C) & \le m 2^d  \qquad \text{for all} \ C \in \cKo^d(2m-1).
\end{align}
The special case $m=1$, known as the \emph{convex body theorem of Minkowski}, was used as a tool in a multitude of contexts ranging from number theory and algebra to integer optimization. Van der Corput's inequlity the general case $m \in \N$ was used in the theory of lattice polytopes: starting from a seminal work of Hensley \cite{MR688412}, van der Corput's inequality was a basic ingredient in deriving upper bounds on the volume of lattice polytopes in terms of their number of interior lattice points; see \cite{MR1138580,MR1996360,MR3318147,AKN2017}. 

The inequality is sharp, as $m 2^d$ is the maximum volume of convex bodies in $\cKo^d(2m-1)$. Consider for example the `stretched box' $C=[-m,m] \times [-1,1]^{d-1}$, for which the maximum volume $m 2^d$ is attained.  Having a sharp inequality, it is natural to wonder about a possible characterization of its equality case. It is quite surprising that the equality case of van der Corput's inequality has not yet been studied. 
 In author's option, such a characterization must have applications in theory of lattice polytopes, and this was author's original source of motivation. We also mention that \eqref{vdc:ineq} has a discrete counterpart, which has been derived in \cite{MR3480976}, and for which the authors of \cite{MR3480976} have characterized the equality case (see also \cite{MR3022127} for a related result).

Below we give a short summary of what has been known about the equality case of \eqref{vdc:ineq}. With each $A \subseteq \R^d$, one can associate the family
\[
	\cT(A):= \setcond{A + z}{z \in \Z^d}
\]
of translations of $A$ by the vectors of the integer lattice. For $K \in \cK^d$, the family $\cT(K)$ is called an \emph{$m$-fold tiling} if each $x \in \R^d$ is an element of exactly $m$ members of $\cT(K)$ unless $x$ is in the boundary of one of the members. Gruber and Lekkerkerker \cite[\S12.1]{MR893813} observed that, if the equality in \eqref{vdc:ineq} is attained, then $\cT(\frac{1}{2} C)$ is an $m$-fold tiling. This provides a connection to the theory of $m$-fold tilings by lattice translations of convex bodies. 

For $m=1$, the following converse implication is known to be true: If $\cT(K)$, with $K \in \cKo^d$, is a one-fold tiling, then $C= 2 K$ belongs to $\cKo^d(1)$ and attains  equality in \eqref{vdc:ineq}. Thus, studying the equality case of \eqref{vdc:ineq} for $m=1$ is equivalent to studying one-fold tilings by translations of a centrally symmetric convex body. This topic has a long history and, over the years, strong results on this topic have been discovered, both for general and concrete dimensions; see \cite[\S~12]{MR893813} and \cite[Ch.~32]{MR2335496}. One of the key results  is the Theorem of Venkov, Alexandrov and McMullen, which provides a  characterization of convex bodies that tile space by (lattice) translations; see \cite[\S32.2]{MR2335496}. 

In contrast to the case $m=1$, for larger values of $m \ge 2$, studying equality case of \eqref{vdc:ineq} is \emph{not} equivalent to studying arbitrary $m$-fold tilings by translations of a centrally symmetric convex body. In fact, if $\cT(K)$, with $K \in \cKo^d$, is an $m$-fold tiling, then $C=2K$ does not necessarily have $2m-1$ interior lattice points. This was observed by Gruber and Lekkerker \cite[\S12.1]{MR893813}. On the other hand, if $m \in \N$ is arbitrary, \emph{assuming} the property of having $2m-1$ interior lattice points, we get a characterization: for $C \in \cKo^d(2m-1)$, equality in \eqref{vdc:ineq} is attained if and only if $\cT(\frac{1}{2} C)$ is an $m$-fold tiling. Thus, studying the equality case in \eqref{vdc:ineq} for $m \ge 2$ can be reduced to studying special $m$-fold tilings $\cT(K)$ with $K \in \cKo^d$ and the property that $C= 2K$ belongs to $\cKo^d(2m-1)$. The latter observation explains the qualitative differences between the cases $m=1$ and $m \ge 2$. It should also be mentioned that the theory of $m$-fold lattice tilings for a general $m \in \N$ is more intricate than its classical case $m=1$ so that this theory does not immediately help to solve the problem we address in this paper. In particular, characterization of general $m$-fold lattice tilings by translations of convex bodies is a hard problem, even if the dimension is two and $m$ is fixed; see \cite{MR1383609,yang2017multiple,yang2017multiple:trans,zong2017characterization}. 

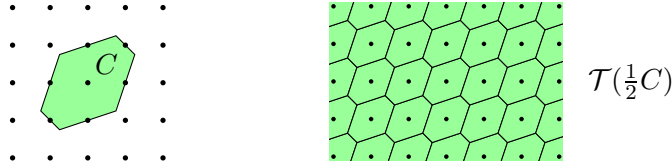
\begin{figure}
	\begin{center}
				\begin{tikzpicture}[scale=0.5]
		\filldraw[fill=myGreen] (1.25,0.75) -- (0.75,1.25) -- (-0.75,0.75) -- (-1.25,-0.75) -- (-0.75,-1.25) -- (0.75,-0.75) -- cycle;
		\foreach \x in {-2,...,2}
		\foreach \y in {-2,...,2}
		{
			\fill (\x,\y) circle (0.07);
		}
		\node at (0.5,0.5) {$C$};
		\end{tikzpicture}
		\hspace{5em}
		\begin{tikzpicture}[scale=0.25]
		\begin{scope}
		\clip (-6.2,-4.2) rectangle (6.2,4.2);
		\foreach \x in {-3,...,3}
		\foreach \y in {-3,...,3}
		{ 
			\filldraw[fill=myGreen] ({2*\x+1.25},{2*\y+0.75}) -- ({2*\x+0.75},{2*\y+1.25}) -- ({2*\x-0.75},{2*\y+0.75}) -- ({2*\x-1.25},{2*\y-0.75}) -- ({2*\x-0.75},{2*\y-1.25}) -- ({2*\x+0.75},{2*\y-0.75}) -- cycle;
			\fill (2*\x,2*\y) circle (0.12);
		}
		\end{scope}
		\node[right] at (7,0) {$\cT(\frac{1}{2} C)$};
		\end{tikzpicture}
	\end{center}
	\caption{\label{fig:ex:extremal} Example of an extremal convex body and the respective one-fold tiling}
\end{figure}

Following Gruber and Lekkerkerker \cite[\S12.1]{MR893813}, we call a set $C \in \cK_o^d(1)$  satisfying $\vol(C)=2^d$ \emph{extremal} (see also Fig.~\ref{fig:ex:extremal} for an example). Our main result is a characterization of the equality case in \eqref{vdc:ineq} for $d \ge 2$ and $m \ge 2$ in terms of extremal bodies. For $d=1$ the characterization is trivial: the segment $[-m,m]$ is the only volume maximizer in $\cK_o^1(2m-1)$. One can thus focus on dimensions $d \ge 2$. Given $d \ge 2$, a set $B \subseteq \R^{d-1}$ and functions $f, g : \R^{d-1} \to \R$, we introduce the set
\[
	L(B,f,g) := \setcond{(y,t) \in B \times \R}{f(y) \le t \le g(y)}.
\] 
A map $\phi : \R^d \to \R^d$ is called \emph{unimodular transformation} if $\phi$ is an affine transformation satisfying $\phi(\Z^d) = \Z^d$.  For $C \in \cKo^d$ and $B \in \cKo^{d-1}$, we say that $K$ is a \emph{cylindrical $m$-lifting} of $B$ if $C=\phi(L(B,a-m,a+m))$ for some linear unimodular transformation $\phi : \R^d \to \R^d$ and a linear function $a : \R^{d-1} \to \R$.

\begin{theorem}
	\label{thm:vdc:ineq:eqcase}
	Let $d, m \in \N$ and $d, m \ge 2$. Then a convex body $C \in \cK_o^d(2m-1)$ satisfies $\vol(C) = m 2^d$ if and only if $C$ is a cylindrical $m$-lifting of a $(d-1)$-dimensional extremal convex body. (See also Fig.~\ref{fig:CBl} for an illustration in dimension two.)
\end{theorem}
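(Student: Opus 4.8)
The plan is to prove the two implications separately, with essentially all of the work concentrated in the forward direction (equality $\Rightarrow$ cylindrical lifting). The reverse implication is a direct verification: since a linear unimodular $\phi$ preserves $\vol$, interiors and $\Z^d$, I may assume $C = L(B,a-m,a+m)$ with $B\in\cKo^{d-1}$ extremal and $a$ linear. Central symmetry of $C$ follows from $B=-B$ and $a(-y)=-a(y)$; Fubini gives $\vol(C)=2m\,\vol_{d-1}(B)=2m\cdot 2^{d-1}=m2^d$ by extremality; and a lattice point $(y,t)$ lies in $\intr{C}$ iff $y\in\intr{B}\cap\Z^{d-1}$ and $a(y)-m<t<a(y)+m$, which (since $0$ is the only interior lattice point of the extremal $B$) forces $y=0$, $a(0)=0$ and $t\in\{-(m-1),\dots,m-1\}$, i.e.\ exactly $2m-1$ points.

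For the forward direction I set $K=\tfrac12 C$, so that $\cT(K)$ is an $m$-fold tiling and $\vol(K)=m$. The heart of the argument is a collision analysis. Put $S:=\intr{C}\cap\Z^d$, so $\card{S}=2m-1$; central symmetry gives $K-K=2K=C$, hence $z\in S$ exactly when $\intr{K}\cap\intr{(K+z)}\neq\emptyset$, i.e.\ $S$ is the set of lattice collisions of the tiling (including $0$). For generic $x$ the set $T_x:=\setcond{z\in\Z^d}{x\in\intr{(K+z)}}$ has $\card{T_x}=m$ and satisfies $T_x-T_x\subseteq S$, so $2m-1=2\card{T_x}-1\le\card{T_x-T_x}\le\card{S}=2m-1$. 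Equality throughout, together with the equality case of Freiman's inequality $\card{A-A}\ge 2\card{A}-1$, forces $T_x$ to be an arithmetic progression with $T_x-T_x=S$. Thus $S=\setcond{jv}{\card{j}\le m-1}$ for a vector $v$ which, being determined by the fixed set $S$, is independent of $x$; and $v$ is primitive, since otherwise $v/c$ for a proper divisor $c$ would be an interior lattice point of the convex set $\intr{C}$ lying off the progression.

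Applying a linear unimodular transformation I may assume $v=e_d$, so all collisions lie on the last axis. Let $\pi:\R^d\to\R^{d-1}$ forget the last coordinate and set $B':=\pi(K)$. If two translates $B'+w$, $B'+w'$ with $w\neq w'$ had overlapping interiors, aligning the corresponding fibres by integer shifts along $e_d$ would produce a collision $z$ with $\pi(z)\neq 0$, contradicting $S\subseteq\R e_d$; hence $\setcond{B'+w}{w\in\Z^{d-1}}$ has pairwise disjoint interiors and, as coverage equals $m\ge 1$ everywhere, is a one-fold tiling of $\R^{d-1}$. By the $m=1$ equivalence applied to $\tfrac12 B=B'$, the body $B:=\pi(C)=2B'$ is extremal in $\R^{d-1}$. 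Finally, over a generic $y$ exactly one translate is active and must account for coverage $m$ along the entire fibre; since an open interval of length $\ell$ contains a constant number of integers only when $\ell\in\Z$ (and then that number is $\ell$), the fibre-length function of $K$ is identically $m$. Hence $K$ is a cylinder over $B'$; with $\mu-\lambda\equiv m$, $\mu$ concave and $\lambda$ convex, the midpoint function is affine, and central symmetry makes it linear. Rescaling by $2$ yields $C=L(B,a-m,a+m)$ with $B$ extremal and $a$ linear, and undoing $\phi$ completes the proof.

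I expect the main obstacle to be the additive-combinatorial step: recasting the geometric collisions as a difference set and invoking the equality case of Freiman's inequality to force collinearity and primitivity of the collision direction. A secondary difficulty is the bookkeeping required to pass rigorously from \emph{collisions only along $e_d$} to the clean statements that $\setcond{B'+w}{w}$ tiles once and that the fibre length is constant, while controlling genericity and boundary sets throughout.
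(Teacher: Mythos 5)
Your overall strategy coincides with the paper's: pass to $K=\tfrac12 C$, use the difference-set inequality $|X-X|\ge 2|X|-1$ and its equality case to force the collision set to be a one-dimensional arithmetic progression along a primitive vector, normalize that vector to $e_d$, show that the fibres of $K$ over $\intr{\pi(K)}$ all have length $m$, and read off the cylinder structure and the extremality of the base. The reverse implication is fine, and so is the additive-combinatorial step (it is essentially Theorem~\ref{coll:mult:packings} unfolded; your direct identification of the collision set as $\setcond{jv}{|j|\le m-1}$ with $v$ primitive is a clean way to package what the paper gets from Theorem~\ref{thm:vdc:ineq}\eqref{vdc:modified}).

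There is, however, a genuine gap at the step ``if two translates $B'+w$, $B'+w'$ had overlapping interiors, aligning the corresponding fibres by integer shifts along $e_d$ would produce a collision $z$ with $\pi(z)\neq 0$.'' Writing $u=w'-w$ and picking $y\in\intr{B'}\cap\intr{B'+u}$, the integer shifts $k$ that yield a collision $(u,k)$ are exactly those in the open interval $\intr{I_K(y)}-\intr{I_K(y-u)}$, whose length is $f_K(y)+f_K(y-u)$; if this is at most $1$, no such $k$ need exist. The implication is false for a general convex body: for the parallelogram $K=\conv\{(0,0),(1.2,0.3),(1.2,0.4),(0,0.1)\}$ in $\R^2$ the projections $\pi(K)+w$, $w\in\Z$, overlap, yet the fibre of $K-K$ over $u=1$ is $[0.15,0.35]$ and over $u=-1$ is $[-0.35,-0.15]$, so there is no collision off the vertical axis. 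Your step therefore must invoke the $m$-fold covering hypothesis, and this is precisely where the paper's Lemma~\ref{lem:cover:xray} does the real work: for $x=(y,t)\in\intr{K}$ every $z\in\Z^d$ with $x\in K+z$ lies in $x-K\subseteq\intr{K-K}$ and hence is a multiple of $e_d$, so $\mult(K,x)=|I_K(y)\cap(t+\Z)|$; choosing $t$ close to the lower endpoint of $I_K(y)$ shows that $f_K(y)<m$ would leave $x$ covered at most $m-1$ times. This yields $f_K\ge m$ on all of $\intr{\pi(K)}$ \emph{before} anything is known about the projected family; afterwards your shift interval has length at least $2m>1$ and everything downstream (the packing and hence one-fold tiling of the $B'+w$, the extremality of $B$, the constancy of the fibre length) follows. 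As written, your derivation of the fibre length $\equiv m$ depends on the packing of the projections, so it cannot be used to close the gap without circularity; you defer this passage as ``bookkeeping,'' but it is the one genuinely geometric step of the proof and the missing lower bound $f_K\ge m$ is its content.
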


\begin{figure}
	\begin{center}
				\begin{tikzpicture}[scale=0.4]
		\filldraw[fill=myGreen] (1,2.5) -- (-1,1.5) -- (-1,-2.5) -- (1,-1.5) -- (1,1.5) -- cycle;
		\foreach \x in {-2,...,2}
		\foreach \y in {-3,...,3}
		{
			\filldraw (\x,\y) circle (0.07);
		}
		\filldraw (0,0)  circle (0.07); 
		\node[right] at (-0.1,0) {\scriptsize $o$};  
		\end{tikzpicture}
		%	\hspace{10ex}
		%	\begin{tikzpicture}[scale=0.6]
		%	\filldraw[fill=black!10!white] (1,3) -- (-1,1) -- (-1,-3) -- (1,-1) -- (1,1) -- cycle;
		%	\foreach \x in {-2,...,2}
		%	\foreach \y in {-3,...,3}
		%	{
		%		\filldraw (\x,\y) circle (0.07);
		%	}
		%	\filldraw (0,0)  circle (0.07) node[right]{$o$};  
		%	\end{tikzpicture}
	\end{center}
	\caption{\label{fig:CBl} Example of $L(B,a-m,a+m)$ for $d=2, m=2$, $B=[-1,1]$ and $a(y) = \frac{1}{2} y$}
\end{figure}
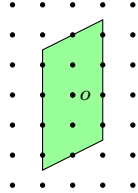

It is known that a $d$-dimensional extremal convex body is a polytope with at most $2^{d+1} -2$ facets (see \cite[Prop.~32.4, p.~470]{MR2335496} and \cite[\S12.3, Thm.~6]{MR893813}). This result and  Theorem~\ref{thm:vdc:ineq:eqcase} imply that convex bodies $C \in \cKo^{d-1}(2m-1)$ attaining equality $\vol(C) = m 2^d$ are prisms with at most $2^d$ facets. 

We present several further consequences of Theorem~\ref{thm:vdc:ineq:eqcase}.  For a family $\cT(K)$, with $K \in \cK^d$, one can look at members of $\cT(K)$ `colliding' with $K$. Formally, we say that $z \in \Z^d \setminus \{o\}$ is a \emph{collision vector} of $\cT(K)$ if the interiors of $K$ and $K+z$ have a non-empty intersection. The number of collision vectors is even, as they come in pairs $\pm z$. The set of collision vectors can be described as the set of all non-zero lattice vectors in the interior of $K-K$. The following corollary interprets van der Corput's inequality and Theorem~\ref{thm:vdc:ineq:eqcase} in the context of $m$-fold lattice tilings:

\begin{corollary}
	\label{cor:multiple:tilings}
	Let $\cT(K)$, with $K \in \cK^d$, be an $m$-fold lattice tiling with $m \ge 2$  having $2 N$ collision vectors. Then the following hold: 
	\begin{enumerate}[(a)]
		\item \label{cor:mult:ineq} One has $m \le N + 1$. 
		\item \label{cor:mult:eq:case} The equality $m= N + 1$ is attained if and only if a translation of $2 K$ is a cylindrical $m$-lifting of a $(d-1)$-dimensional extremal convex body. 
	\end{enumerate}
\end{corollary}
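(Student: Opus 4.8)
The plan is to reduce the corollary to van der Corput's inequality \eqref{vdc:ineq} and Theorem~\ref{thm:vdc:ineq:eqcase}, both applied to the difference body $D := K - K \in \cKo^d$. Two elementary facts about an $m$-fold lattice tiling $\cT(K)$ set everything up. First, integrating the (a.e.\ constant) covering multiplicity of $\cT(K)$ over the fundamental domain $[0,1)^d$ gives $\vol(K) = m$. Second, by the description of collision vectors recalled just before the corollary, the interior lattice points of $D$ are exactly the origin together with the $2N$ collision vectors, so $D$ has $2N+1$ interior lattice points, i.e.\ $D \in \cKo^d(2(N+1)-1)$.

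Part (a) then follows by sandwiching $\vol(D)$. From below, Brunn--Minkowski applied to $K$ and $-K$ yields $\vol(D)^{1/d} \ge \vol(K)^{1/d} + \vol(-K)^{1/d} = 2\,\vol(K)^{1/d}$, hence $\vol(D) \ge 2^d \vol(K) = 2^d m$. From above, \eqref{vdc:ineq} applied to $D \in \cKo^d(2(N+1)-1)$ gives $\vol(D) \le (N+1)2^d$. Combining, $2^d m \le \vol(D) \le 2^d(N+1)$, so $m \le N+1$.

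For part (b), equality $m = N+1$ holds if and only if both inequalities of the previous paragraph are equalities. Equality in Brunn--Minkowski forces $K$ and $-K$ to be homothetic with positive ratio; equal volumes then make the ratio $1$, so $-K = K + v$ and $K$ is centrally symmetric about $c := -v/2$, whence $D = K - K = 2K - 2c$ is a translate of $2K$. Equality in \eqref{vdc:ineq} for $D \in \cKo^d(2(N+1)-1)$, together with $d \ge 2$ and $N+1 = m \ge 2$, lets me apply Theorem~\ref{thm:vdc:ineq:eqcase} with parameter $N+1 = m$ to conclude that $D$ is a cylindrical $m$-lifting of a $(d-1)$-dimensional extremal body; since $D = 2K - 2c$, this exhibits a translate of $2K$ as such a lifting. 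Conversely, if a translate of $2K$ equals a cylindrical $m$-lifting $C_0 = \phi(L(B,a-m,a+m))$, then $C_0$ is origin-symmetric, so $K$ is centrally symmetric and a short computation gives $K - K = C_0$. A direct calculation shows every cylindrical $m$-lifting lies in $\cKo^d(2m-1)$ with volume $m2^d$: the vertical extent of $L(B,a-m,a+m)$ over $B$ is constantly $2m$, so $\vol(C_0) = 2m\,\vol_{d-1}(B) = 2m \cdot 2^{d-1} = m2^d$, while the unique interior lattice point $o$ of the extremal base contributes exactly the integers $t$ with $-m < t < m$. Thus $D = C_0$ has $2m-1$ interior lattice points, and comparing with $2N+1$ gives $m = N+1$.

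I expect the main obstacle to be the equality analysis in part (b): one must use the sharp equality characterization of Brunn--Minkowski (homothety \emph{with positive ratio}) to extract central symmetry of $K$, and then track the translation relating $2K$ and $D = K - K$ so that the hypotheses of Theorem~\ref{thm:vdc:ineq:eqcase} apply with the matching parameter $m = N+1$. The verification that a cylindrical $m$-lifting automatically belongs to $\cKo^d(2m-1)$ with volume $m2^d$ is routine but indispensable, since it underlies both the application of the theorem and the converse implication.
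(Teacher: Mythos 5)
Your argument is correct, but it follows a genuinely different route from the paper's. The paper begins by citing the theorem of Gravin, Robins and Shiryaev to conclude that $K$ is centrally symmetric, translates so that $K\in\cKo^d$, and then obtains (a) by applying \eqref{vdc:ineq} to $2K$ (which has $2N+1$ interior lattice points and, by Proposition~\ref{multiple:pack:cov:vol}, volume $m2^d$) and (b) by applying Theorem~\ref{thm:vdc:ineq:eqcase} directly to $C=2K$. You instead pass to the difference body $D=K-K\in\cKo^d(2N+1)$ and sandwich its volume: Brunn--Minkowski gives $\vol(D)\ge 2^d\vol(K)=2^d m$ from below, while \eqref{vdc:ineq} gives $\vol(D)\le(N+1)2^d$ from above, which proves (a) with no symmetry hypothesis and no external tiling theorem. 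For (b) you recover the central symmetry of $K$ from the equality case of Brunn--Minkowski (homothety plus equal volumes), identify $D$ with a translate of $2K$, and only then invoke Theorem~\ref{thm:vdc:ineq:eqcase}; your converse correctly verifies that a cylindrical $m$-lifting of an extremal body has volume $m2^d$ and exactly $2m-1$ interior lattice points, forcing $2m-1=2N+1$. What your route buys is self-containedness: the deep Gravin--Robins--Shiryaev input is replaced by the classical Brunn--Minkowski equality characterization, and part (a) needs no symmetry at all. What the paper's route buys is brevity, since central symmetry is available up front and the difference-body detour is unnecessary. Both proofs implicitly assume $d\ge 2$ so that Theorem~\ref{thm:vdc:ineq:eqcase} is applicable in part (b).
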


If, for an $m$-fold tiling $\cT(K)$ with $K \in \cK^d$, there exists a sub-lattice $\Lambda$ of $\Z^d$ such that $\setcond{K+z}{z \in \Lambda}$ is a one-fold tiling, then we say that $\cT(K)$ is a \emph{replication} of the one-fold tiling $\setcond{K+z}{z \in \Lambda}$. If this is the case, then $\cT(K)$ can be split into $m$ `copies' of $\setcond{K+z}{z \in \Lambda}$. In fact, since $\det(\Lambda)=m$, the quotient group $\Lambda / \Z^d$ has $m$ elements. We can thus choose $v_1,\ldots,v_m \in \Z^d$ with $\Z^d = \{v_1,\ldots,v_m\} + \Lambda$. With this choice, $\Z^d$ can be split into $m$ translations $v_i + \Lambda$ of $\Lambda$.  Correspondingly, $\cT(K)$ is split into $m$ one-fold tilings $\cT_i:=\setcond{K+z}{z \in v_i + \Lambda}$. 

The following corollary shows that $m$-fold tilings with a small number of collision vectors are replications of one-fold tilings.
\begin{corollary}
	\label{cor:N<=2}
	Let $m \in \N$ and $K \in \cK^d$ and let $\cT(K)$ be an  $m$-fold tiling with at most four collision vectors. Then $\cT(K)$ is a replication of a one-fold lattice tiling. (See also Fig.~\ref{fig:cor:N<=2} for an illustration.)
\end{corollary}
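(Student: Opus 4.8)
The plan is to translate ``replication'' into a lattice condition and then construct the required sublattice by a short analysis of the collision vectors, the only real difficulty being a single degenerate configuration. First I would record a workable criterion: since $\cT(K)$ is an $m$-fold tiling and $\det(\Z^d)=1$, a density count gives $\vol(K)=m$. I claim $\cT(K)$ is a replication if and only if $\Z^d$ has a sublattice $\Lambda$ of index $m$ that is \emph{collision-free}, i.e. $\Lambda\cap\intr{K-K}=\{o\}$. The forward implication is immediate. For the converse, collision-freeness makes $\setcond{K+z}{z\in\Lambda}$ a packing, and $\det(\Lambda)=m=\vol(K)$. Writing $\theta_\Lambda(x)$ for the number of $z\in\Lambda$ with $x\in\intr{K+z}$ and choosing representatives $\Z^d=\{v_1,\dots,v_m\}+\Lambda$, the $m$-fold tiling property gives $\sum_{i=1}^m\theta_\Lambda(x-v_i)=m$ for almost every $x$, while packing gives $\theta_\Lambda\le1$ almost everywhere; hence $\theta_\Lambda=1$ almost everywhere and $\setcond{K+z}{z\in\Lambda}$ is a one-fold tiling. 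So the whole problem reduces to finding a collision-free sublattice of index $m$.

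Next I would bound and classify. By Corollary~\ref{cor:multiple:tilings} and $N\le2$ we get $m\le N+1\le3$; the case $m=1$ is settled by $\Lambda=\Z^d$. Because $\intr{K-K}$ is convex, symmetric and contains $o$, the set $\intr{K-K}\cap\Z^d$ contains, with each point $p$, every lattice point of $[-p,p]$; thus a non-primitive collision vector would force additional ones. With at most four collision vectors this leaves exactly three nontrivial configurations: $\{\pm z_1\}$ with $z_1$ primitive; $\{\pm z_1,\pm z_2\}$ with $z_1,z_2$ primitive and linearly independent; and the collinear set $\{\pm z_1,\pm2z_1\}$ with $z_1$ primitive. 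For the remaining prime values $m\in\{2,3\}$, every index-$m$ sublattice is $\ker\chi$ for a surjection $\chi:\Z^d\to\mathbb{F}_m$, and collision-freeness of $\ker\chi$ means precisely that $\chi$ does not vanish on any collision vector.

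Then I would carry out the construction. A collision vector divisible by $m$ would, by the segment property above, create more than $2N$ collision vectors, except for $2z_1$ in the collinear configuration with $m=2$. Outside that single case every collision vector reduces to a nonzero element of $\mathbb{F}_m^d$, and I can choose $\chi$ avoiding the at most two resulting directions: the functionals killing a fixed nonzero vector form a hyperplane of $(\mathbb{F}_m^d)^\ast$, and an inclusion--exclusion count shows that at most two such hyperplanes cannot cover the whole dual space; any $\chi$ outside their union yields a collision-free $\Lambda=\ker\chi$ of index $m$.

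The hard part will be excluding the collinear configuration $\{\pm z_1,\pm2z_1\}$ with $m=2$, where no collision-free index-$2$ sublattice can exist since every index-$2$ sublattice contains $2z_1$. After a unimodular change of coordinates I may take $z_1=e_1$, so that all collision vectors lie on $\R e_1$; then two translates whose offsets differ in the coordinates transverse to $e_1$ have disjoint interiors, and $\cT(K)$ splits into pairwise interior-disjoint ``columns'' obtained by translating a fixed body along $\Z e_1$. On a generic line parallel to $e_1$ the columns cut out interior-disjoint period-one families of intervals whose multiplicities sum to $2$ almost everywhere; a one-dimensional analysis then forces a single column to cover the line and its fiber of $K$ to have length exactly $2$. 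Hence every $e_1$-fiber of $K$ has length $2$, so no two interior points of $K$ with equal transverse coordinates differ by $2e_1$, contradicting $2e_1\in\intr{K-K}$. This rules out the offending configuration, so the construction always succeeds and $\cT(K)$ is a replication.
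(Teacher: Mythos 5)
Your proposal is correct, but it takes a genuinely different route from the paper's. The paper first pins down $m$ in each case (via Corollary~\ref{cor:multiple:tilings}\eqref{cor:mult:ineq} and Lemma~\ref{lem:m>=2}) and then, whenever $m=N+1$, invokes the full equality-case machinery: Corollary~\ref{cor:multiple:tilings}\eqref{cor:mult:eq:case} shows $2K$ is a cylindrical $m$-lifting of an extremal body, and Lemma~\ref{lem:lifting:replication} turns that structure into a replication; only the single leftover case $N=2$, $m=2$ is handled by exhibiting a sublattice directly (the even-coordinate-sum lattice, after normalizing the non-collinear collision vectors to $\pm e_1,\pm e_2$). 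You instead reduce everything to producing a collision-free sublattice of prime index $m\in\{2,3\}$ and find it by linear algebra over $\mathbb{F}_m$ (a nonzero functional avoiding at most two hyperplanes of the dual space), which bypasses Theorem~\ref{thm:vdc:ineq:eqcase} and the extremal-body structure entirely and needs the Gravin--Robins--Shiryaev symmetry input only through the citation of Corollary~\ref{cor:multiple:tilings}\eqref{cor:mult:ineq}. Both proofs must kill the same degenerate configuration, the collinear set $\{\pm z_1,\pm 2z_1\}$ with $m=2$: the paper does this via Lemmas~\ref{lem:cover:xray}, \ref{lem:pack:xray} and \ref{lem:cylinders} (the last requiring $o$-symmetry), whereas your column decomposition re-derives the fiber-length identity $f_K=2$ from scratch and contradicts $2z_1\in\intr{K-K}$ directly, without the cylinder lemma or symmetry. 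What your approach buys is a more elementary, self-contained proof independent of the main theorem; what it gives up is generality, since it leans on $m$ being prime and on there being at most two collision directions, while the paper's route through the equality case is the one that scales conceptually. Two small points worth making explicit in a write-up: the passage from ``$f_K(y')=2$ for almost every $y'$'' to the contradiction should note that two interior points $p,q$ of $K$ with $p-q=2e_1$ force $f_K>2$ on a set of positive measure of transverse coordinates (or use continuity of the concave function $f_K$ on $\intr{\pi(K)}$); and the one-dimensional step should record that a period-one family of intervals of length $\lambda\ge 1$ covers its line, so interior-disjointness forces all other columns to have degenerate fibers there.
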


It would be interesting to determine the largest value $N^\ast \in \N$ with the property that every $m$-fold tiling $\cT(K)$, with $K \in \cK^d$, that has at most $2N^\ast$ collision vectors is a replication of a one-fold lattice tiling. According to Corollary~\ref{cor:N<=2}, one has $N^\ast \ge 2$. On the other hand, one can show $N^\ast < 10$ using the following example from \cite[\S1]{MR3063154}. Consider the octagon $K$ obtained as the convex hull of $\{0,1,2,3\}^2 \setminus \{0,3\}^2$. The family $\cT(K)$ is a seven-fold tiling with $20$ collision vectors; see also Fig.~\ref{fig:octagon}. It was mentioned above that every $d$-dimensional convex body tiling the space by translations is a polytope with at most $2^{d+1}-2$ facets. Thus, an octagon there exist no octagon tiling the plane by translations. This shows that $\cT(K)$ is not a replication of a one-fold tiling and implies $N^\ast < 10$. 

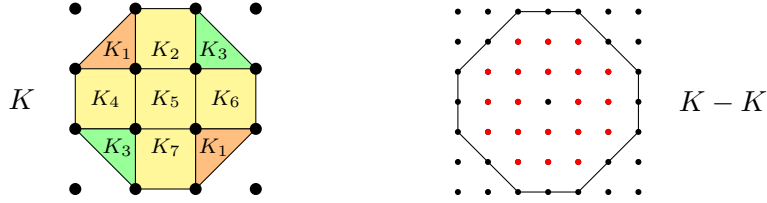
\begin{figure}[h!]
	\begin{center}
		\begin{tikzpicture}[scale=0.8]
	\filldraw[fill=orange!50!white] (0,2) -- (1,2) -- (1,3) -- cycle;
	\filldraw[fill=yellow!50!white] (1,2) -- (2,2) -- (2,3) -- (1,3) -- cycle;
	\filldraw[fill=green!40!white] (2,2) -- (3,2) -- (2,3) -- cycle;
	\filldraw[fill=yellow!50!white] (0,1) -- (1,1) -- (1,2) -- (0,2) -- cycle;
	\filldraw[fill=yellow!50!white] (1,1) -- (2,1) -- (2,2) -- (1,2) -- cycle;
	\filldraw[fill=yellow!50!white] (2,1) -- (3,1) -- (3,2) -- (2,2) -- cycle;
	\filldraw[fill=green!40!white] (1,0) -- (1,1) -- (0,1) -- cycle;
	\filldraw[fill=yellow!50!white] (1,0) -- (2,0) -- (2,1) -- (1,1) -- cycle;
	\filldraw[fill=orange!50!white] (2,0) -- (3,1) -- (2,1) -- cycle;
	\node[left] at (-0.5,1.5) {$K$};
	\node at (0.7,2.3) {\scriptsize $K_1$};
	\node at (1.5,2.3) {\scriptsize $K_2$};
	\node at (2.3,2.3) {\scriptsize $K_3$};
	\node at (0.5,1.5) {\scriptsize $K_4$};
	\node at (1.5,1.5) {\scriptsize $K_5$};
	\node at (2.5,1.5) {\scriptsize $K_6$};
	\node at (0.7,0.7) {\scriptsize $K_3$};
	\node at (1.5,0.7) {\scriptsize $K_7$};
	\node at (2.3,0.7) {\scriptsize $K_1$};
	\foreach \x in {0,1,2,3}
	\foreach \y in {0,1,2,3}
		\fill (\x,\y) circle (0.1);	
\end{tikzpicture}
\hspace{6em}
\begin{tikzpicture}[scale=0.4]
	\draw (3,1) -- (1,3) -- (-1,3) -- (-3,1) -- (-3,-1)  -- (-1,-3) -- (1,-3) -- (3,-1) -- cycle;
%	\node at (0.3,0) {\scriptsize $o$};
	\foreach \x in {-3,...,3}
	\foreach \y in {-3,...,3}
		\fill (\x,\y) circle (0.1);
	\foreach \x/\y in {1/0,2/0,2/1,1/1,1/2,0/2,0/1,-1/1,-1/2,-2/1}
	{
		\fill[red] (\x,\y) circle (0.1);
		\fill[red] ({-\x},{-\y}) circle (0.1);
	}
	\node[right] at (4,0) {$K-K$};
\end{tikzpicture}
	\end{center}
	\caption{\label{fig:octagon} For the octagon $K$ given as the convex hull of $\{-0,1,2,3\}^2 \setminus \{0,3\}^2$, the family $\cT(K)$ is a seven-fold tiling. This can be seen by decomposing $K$ into seven non-overlapping sets $K_1,\ldots,K_7$ with $\cT(K_i)$ being a one-fold tiling, for each $i \in \{1,\ldots,7\}$. Collision vectors can be descirbed as non-zero points in the interior of $K-K$. This  allows to check that $\cT(K)$ has $20$ collision vectors.}
\end{figure}

\begin{figure}[h!]
	\begin{center}
				\begin{tikzpicture}[scale=0.5]
		\filldraw[fill=myGreen] (-1/2,3/4) -- (-1/2,-5/4) -- (1/2,-3/4) -- (1/2,5/4) -- cycle;
		\node at (0,1.4) {$K$};
		
		\foreach \x in {4,8} 
		{
			\filldraw[fill=myGreen] ({-1/2+\x},3/4) -- ({-1/2+\x},-5/4) -- ({1/2+\x},-3/4) -- ({1/2+\x},5/4) -- cycle;
		}
		\foreach \x/\y in {4/1,8/-1}
		{
			\filldraw[fill=yellow,opacity=0.5] ({-1/2+\x},{3/4+\y}) -- ({-1/2+\x},{-5/4+\y}) -- ({1/2+\x},{-3/4+\y}) -- ({1/2+\x},{5/4+\y}) -- cycle;		
		}
		\foreach \x in {-2,...,18}
		\foreach \y in {-2,...,2}
		{
			\fill (\x,\y) circle (0.07);
		}
		\end{tikzpicture}
		\\
		\vspace{3ex}
		\begin{tikzpicture}[scale=0.5,baseline=2.5mm]
		\begin{scope}
		\clip (-2.5,-1.5) rectangle (2.5,2.5);
		\foreach \x in {-4,...,4}
		\foreach \y in {-4,-2,0,2,4}
		{
			\filldraw[fill=myGreen] ({-1/2+\x},{3/4+\y}) -- ({-1/2+\x},{-5/4+\y}) -- ({1/2+\x},{-3/4+\y}) -- ({1/2+\x},{5/4+\y}) -- cycle;	
			\fill (\x,\y) circle (0.1);
		}
		\foreach \x in {-4,...,4}
		\foreach \y in {-3,-1,1,3}
		{
			\draw[dotted] ({-1/2+\x},{3/4+\y}) -- ({-1/2+\x},{-5/4+\y}) -- ({1/2+\x},{-3/4+\y}) -- ({1/2+\x},{5/4+\y}) -- cycle;	
			\filldraw[fill=white] (\x,\y) circle (0.1);
		}
		\end{scope}
		\node at (0,-2) {$\cT(K)$};
%		\draw (-2.5,-6) -- (2.5,-6);
%		\foreach \x in {-2,...,2}
%		{
%			\fill (\x,-6) circle (0.1);
%		}
%		\foreach \x in {-2.5,-1.5,-0.5,0.5,1.5,2.5}
%		{
%			\draw (\x,{-6-0.1}) -- (\x,{-6+0.1});
%		}
%		\node at (0,{-6+0.5}) {\scriptsize $\cT([-1/2,1/2])$};
%		\draw[->] (0,-5) --(0,-2.5);
%		\node[right] at (0,-3.8) {\scriptsize lifting};
		\end{tikzpicture}
		$\ = \ $		
		\begin{tikzpicture}[scale=0.5,baseline=2.5mm]
		\begin{scope}
		\clip (-2.5,-1.5) rectangle (2.5,2.5);
		\foreach \x in {-4,...,4}
		\foreach \y in {-4,-2,0,2,4}
		{
			\filldraw[fill=myGreen] ({-1/2+\x},{3/4+\y}) -- ({-1/2+\x},{-5/4+\y}) -- ({1/2+\x},{-3/4+\y}) -- ({1/2+\x},{5/4+\y}) -- cycle;	
			\fill (\x,\y) circle (0.1);
		}
		\end{scope}
		\node at (0,-2) {$\cT_1$};
		\end{tikzpicture}
		$\ \cup \ $
		\begin{tikzpicture}[scale=0.5,baseline=2.5mm]
		\begin{scope}
		\clip (-2.5,-1.5) rectangle (2.5,2.5);
		\foreach \x in {-4,...,4}
		\foreach \y in {-3,-1,1,3}
		{
			\filldraw[dotted,fill=myGreen] ({-1/2+\x},{3/4+\y}) -- ({-1/2+\x},{-5/4+\y}) -- ({1/2+\x},{-3/4+\y}) -- ({1/2+\x},{5/4+\y}) -- cycle;	
			\filldraw[fill=white] (\x,\y) circle (0.1);
		}
		\end{scope}
		\node at (0,-2) {$\cT_2$};
		\end{tikzpicture} \\
		\vspace{3ex}
				% Hexagon example
		\begin{tikzpicture}[scale=0.5]
		\filldraw[fill=myGreen] (0,3/4) --(-1,1/4) --(-1,-1/4) -- (0,-3/4) -- (1,-1/4) -- (1,1/4) -- cycle;
		\node at (0,1.4) {$K$};
		\foreach \x in {4,8,12,16} 
		{
			\filldraw[fill=myGreen] ({0+\x},3/4) --({-1+\x},1/4) --({-1+\x},-1/4) -- ({0+\x},-3/4) -- ({1+\x},-1/4) -- ({1+\x},1/4) -- cycle;
		}
		\foreach \x/\y in {5/0,8/1,11/0,16/-1} 
		{
			\filldraw[fill=yellow,opacity=0.5] ({0+\x},{3/4+\y}) --({-1+\x},{1/4+\y}) --({-1+\x},{-1/4+\y}) -- ({0+\x},{-3/4+\y}) -- ({1+\x},{-1/4+\y}) -- ({1+\x},{1/4+\y}) -- cycle;
		}		\foreach \x in {-2,...,18}
		\foreach \y in {-2,...,2}
		{
			\fill (\x,\y) circle (0.07);
		}
		\end{tikzpicture}
		\\
		\vspace{3ex}
		\begin{tikzpicture}[scale=0.5,baseline=2.5mm]
		\begin{scope}
		\clip (-2.5,-1.5) rectangle (2.5,2.5);
		\foreach \x in {-3,...,3}
		\foreach \y in {-3,...,3}
		{
			\filldraw[fill=myGreen] ({0+\x+\y},{3/4+\x-\y}) --({-1+\x+\y},{1/4+\x-\y}) --({-1+\x+\y},{-1/4+\x-\y}) -- ({0+\x+\y},{-3/4+\x-\y}) -- ({1+\x+\y},{-1/4+\x-\y}) -- ({1+\x+\y},{1/4+\x-\y}) -- cycle;
			\fill ({\x+\y},{\x-\y}) circle (0.1);
		}
		\foreach \x in {-3,...,3}
		\foreach \y in {-3,...,3}
		{
			\draw[dotted] ({0+\x+\y},{3/4+\x-\y+1}) --({-1+\x+\y},{1/4+\x-\y+1}) --({-1+\x+\y},{-1/4+\x-\y+1}) -- ({0+\x+\y},{-3/4+\x-\y+1}) -- ({1+\x+\y},{-1/4+\x-\y+1}) -- ({1+\x+\y},{1/4+\x-\y+1}) -- cycle;
			\filldraw[fill=white] ({\x+\y+1},{\x-\y}) circle (0.1);
		}
		\end{scope}
		\node at (0,-2) {$\cT(K)$};
		\end{tikzpicture}
		$\ = \ $
		\begin{tikzpicture}[scale=0.5,baseline=2.5mm]
		\begin{scope}
		\clip (-2.5,-1.5) rectangle (2.5,2.5);
		\foreach \x in {-3,...,3}
		\foreach \y in {-3,...,3}
		{
			\filldraw[fill=myGreen] ({0+\x+\y},{3/4+\x-\y}) --({-1+\x+\y},{1/4+\x-\y}) --({-1+\x+\y},{-1/4+\x-\y}) -- ({0+\x+\y},{-3/4+\x-\y}) -- ({1+\x+\y},{-1/4+\x-\y}) -- ({1+\x+\y},{1/4+\x-\y}) -- cycle;
			\fill ({\x+\y},{\x-\y}) circle (0.1);
		}
		\end{scope}
		\node at (0,-2) {$\cT_1$};
		\end{tikzpicture}
		$\ \cup \ $
		\begin{tikzpicture}[scale=0.5,baseline=2.5mm]
		\begin{scope}
		\clip (-2.5,-1.5) rectangle (2.5,2.5);
		\foreach \x in {-3,...,3}
		\foreach \y in {-3,...,3}
		{
			\filldraw[dotted,fill=myGreen] ({0+\x+\y},{3/4+\x-\y+1}) --({-1+\x+\y},{1/4+\x-\y+1}) --({-1+\x+\y},{-1/4+\x-\y+1}) -- ({0+\x+\y},{-3/4+\x-\y+1}) -- ({1+\x+\y},{-1/4+\x-\y+1}) -- ({1+\x+\y},{1/4+\x-\y+1}) -- cycle;
		}
		\foreach \x in {-4,...,4}
		\foreach \y in {-4,...,4}
		{
			\filldraw[fill=white] ({\x+\y+1},{\x-\y}) circle (0.1);
		}
		\end{scope}
		\node at (0,-2) {$\cT_2$};
		\end{tikzpicture} \\
		\vspace{3ex}
				% Parallelogram example
		\begin{tikzpicture}[scale=0.5]
		\filldraw[fill=myGreen] (1/4,3/4) --(-5/4,1/4) --(-1/4,-3/4) -- (5/4,-1/4) -- cycle;
		\node at (0,1.4) {$K$};
		\foreach \x in {4,8,12,16} 
		{
			\filldraw[fill=myGreen] ({1/4+\x},3/4) --({-5/4+\x},1/4) --({-1/4+\x},-3/4) -- ({5/4+\x},-1/4) -- cycle;
		}
		\foreach \x/\y in {5/0,8/1,11/0,16/-1} 
		{
			\filldraw[fill=yellow,opacity=0.5] ({1/4+\x},{3/4+\y}) --({-5/4+\x},{1/4+\y}) --({-1/4+\x},{-3/4+\y}) -- ({5/4+\x},{-1/4+\y}) -- cycle;
		}		
		\foreach \x in {-2,...,18}
		\foreach \y in {-2,...,2}
		{
			\fill (\x,\y) circle (0.07);
		}
		\end{tikzpicture}
		\\
		\vspace{3ex}
		\begin{tikzpicture}[scale=0.5,baseline=2.5mm]
		\begin{scope}
		\clip (-2.5,-1.5) rectangle (2.5,2.5);
		\foreach \x in {-4,...,4}
		\foreach \y in {-4,...,4}
		{
			\filldraw[fill=myGreen] ({1/4+\x+\y},{3/4+\x-\y}) --({-5/4+\x+\y},{1/4+\x-\y}) --({-1/4+\x+\y},{-3/4+\x-\y}) -- ({5/4+\x+\y},{-1/4+\x-\y}) -- cycle;
			\fill ({\x+\y},{\x-\y}) circle (0.1);
		}
		\foreach \x in {-4,...,4}
		\foreach \y in {-4,...,4}
		{
			\draw[dotted] ({1/4+\x+\y+1},{3/4+\x-\y}) --({-5/4+\x+\y+1},{1/4+\x-\y}) --({-1/4+\x+\y+1},{-3/4+\x-\y}) -- ({5/4+\x+\y+1},{-1/4+\x-\y}) -- cycle;
			\filldraw[fill=white] ({\x+\y+1},{\x-\y}) circle (0.1);
		}
		\end{scope}
		\node at (0,-2) {$\cT(K)$};
		\end{tikzpicture}
		$\ = \ $
		\begin{tikzpicture}[scale=0.5,baseline=2.5mm]
		\begin{scope}
		\clip (-2.5,-1.5) rectangle (2.5,2.5);
		\foreach \x in {-4,...,4}
		\foreach \y in {-4,...,4}
		{
			\filldraw[fill=myGreen] ({1/4+\x+\y},{3/4+\x-\y}) --({-5/4+\x+\y},{1/4+\x-\y}) --({-1/4+\x+\y},{-3/4+\x-\y}) -- ({5/4+\x+\y},{-1/4+\x-\y}) -- cycle;
			\fill ({\x+\y},{\x-\y}) circle (0.1);
		}
		\end{scope}
		\node at (0,-2) {$\cT_1$};
		\end{tikzpicture}
		$\ \cup \ $
		\begin{tikzpicture}[scale=0.5,baseline=2.5mm]
		\begin{scope}
		\clip (-2.5,-1.5) rectangle (2.5,2.5);
		\foreach \x in {-4,...,4}
		\foreach \y in {-4,...,4}
		{
			\filldraw[dotted,fill=myGreen] ({1/4+\x+\y+1},{3/4+\x-\y}) --({-5/4+\x+\y+1},{1/4+\x-\y}) --({-1/4+\x+\y+1},{-3/4+\x-\y}) -- ({5/4+\x+\y+1},{-1/4+\x-\y}) -- cycle;
			\filldraw[fill=white] ({\x+\y+1},{\x-\y}) circle (0.1);
		}
		\end{scope}
		\node at (0,-2) {$\cT_2$};
		\end{tikzpicture}
	\end{center}
	\caption{\label{fig:cor:N<=2} Examples in dimension two, illustrating Corollary~\ref{cor:N<=2}}
\end{figure}
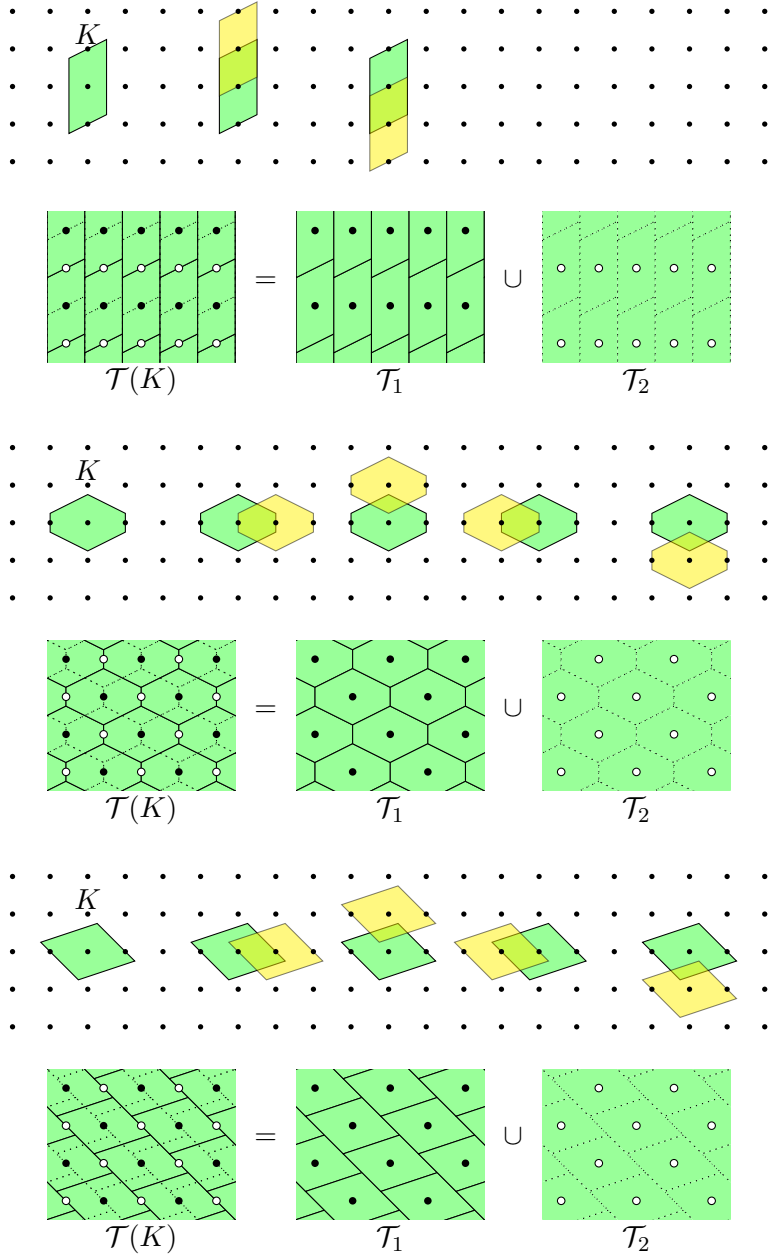

The paper is organized as follows. Section~\ref{sect:background} contains notation and terminology along with a few basic observations.  In Section~\ref{sec:basic}, we revise the proof of van der Corput's inequality in order to add some necessary refinements. Section~\ref{sec:eq:case} contains the proof of Theorem~\ref{thm:vdc:ineq:eqcase} and its consequences.

\section{Preliminaries}

\label{sect:background}

Let $\N:=\{1,2,3,\ldots\}$ be the set of natural numbers and let $d \in \N$. The cardinality of a set $X$ is denoted by $|X|$. We define the \emph{dimension} $\dim(X)$ of $X \subseteq \R^d$ as the dimension of the affine hull of $X$. Let $o$ denote the origin and $e_1,\ldots,e_d$ the standard basis of $\R^d$. A set $X \subseteq \R^d$ is called \emph{$o$-symmetric} if, for every $x \in X$, the point $-x$ also belongs to $X$. The interior and closure of $X \subseteq \R^d$ are denoted by $\intr{X}$ and $\cl{X}$, respectively.  

For $A \subseteq \R^d$, we denote by $\chf_A : \R^d \to \R$ the characteristic function of $A$, which is given by 
\[
	\chf_A(x) :=
	\begin{cases}
	  1 & \text{for} \ x \in A, 
	\\ 0 & \text{for} \ x \in \R^d \setminus A.
	\end{cases}
\]  For $X, Y \subseteq \R^d$ and $\alpha \in \R$,  we use the notation 
\begin{align*}
	X+Y & := \setcond{x+y}{x \in X, \ y \in Y},
	\\ X-Y & := \setcond{x-y}{x \in X, \ y \in Y}, 
	\\ \alpha X & := \setcond{\alpha x}{x \in X}.
\end{align*}

A set $X \subseteq \R^d$ is called an \emph{arithmetic progression} if, for some $k \in \N$, the set $X$ is the image of $\{1,\ldots,k\}$ under an affine transformation $\phi :\R \to \R^d$. By $\vol$ we denote the volume, that is, the Lebesgue measure on $\R^d$, scaled so that $\vol([0,1]^d)=1$. A set $K \subseteq \R^d$ is called a \emph{convex body} if $K$ is a compact convex set with non-empty interior. By $\cK^d$ and $\cKo^d$ we denote the family of all convex bodies in $\R^d$ and all $o$-symmetric convex bodies in $\R^d$, respectively. For basic information on convex sets and convex polytopes we refer to \cite{MR3155183,MR2335496}. Observe that one has 
\begin{equation}
	\label{cl:int}
	\cl{\intr{K}}=K \qquad \text{for every} \ K \in \cK^d
\end{equation}
and 
\begin{equation}
	\label{int:sum}
	\intr{A+B} = \intr{A} + B = A + \intr{B} = \intr{A}+\intr{B} \qquad \text{for all} \ A, B \in \cK^d.
\end{equation}

For $d \ge 2$, we will use the projection $\pi : \R^d \to \R^{d-1}$ onto the first $d-1$ components:
\[
	\pi(x_1,\ldots,x_d) := (x_1,\ldots,x_{d-1}).
\]
For $K \in \cK^d$ and $y \in \pi(K)$ we introduce
\begin{align*}
I_K(y) & := \setcond{t \in \R}{(y,t) \in K},
\\ f_K(y) & := \text{length of} \ I_K(y).
\end{align*}
In other words, $\{y \} \times I_K(y)$ is the intersection of $K$ and the vertical line $\{y \} \times \R$ and $f_K(y)$ is the length of this intersection.

For $m \in \N$, a family $\cT$ of subsets of $\R^d$ is called: an \emph{$m$-fold packing} if each $x \in \R^d$ is in at most $m$ sets of the family $\setcond{\intr{K}}{K \in \cT}$, an \emph{$m$-fold covering} if each $x \in \R^d$ is in at least $m$ sets of the family $\cT$, and 
an \emph{$m$-fold tiling} if $\cT$ is both an $m$-fold packing and an $m$-fold covering. If $\cT$ is an $m$-fold tiling for some choice of $m \in \N$, we say that $\cT$ is a \emph{multiple tiling} and call $m$ the \emph{multiplicity} of $\cT$. 

We refer to the elements of $\Z^d$ as \emph{lattice vectors} or \emph{lattice points}. For $A \subseteq \R^d$, we introduce the family 
\[
\cT(A) := \setcond{A +z}{z \in \Z^d}
\]
of all translations of $A$ by lattice vectors. With $\cT(A)$ we associate the \emph{multiplicity function} $\mult(A,\dotvar) : \R^d \to \R$ that counts how many elements of $\cT(A)$ contain a given point $x \in \R^d$. Formally, $\mult(A,x)$ can be expressed as follows:
\begin{align}
	\mult(A,x) :=& \left|\setcond{z \in \Z^d}{x \in A + z}\right| \label{mult:a}
	\\ =&  |A \cap (x+ \Z^d)| \label{mult:b}
	\\ =& \sum_{z \in \Z^d} \chf_A(x+z). \label{mult:c}
\end{align}
It is clear that $\mult(A,x)$ is $\Z^d$-periodic in $x$.  For $K \in \cK^d$, the family $\cT(K)$ is an $m$-fold covering if and only if $\mult(K,x) \ge m$ for all $x \in \R^d$ and $\cT(K)$ is an $m$-fold packing if and only if $\mult(\intr{K},x) \le m$ for all $x \in \R^d$. We say that $z \in \Z^d \setminus \{o\}$ is a \emph{collision vector} of $\cT(K)$ if $\intr{K} \cap (\intr{K}+z) \ne \emptyset$. Collision vectors describe which pairs of the family $\cT(K)$ overlap. In view of \eqref{int:sum}, the set of all collision vectors of $\cT(K)$ can be represented as $\intr{K-K} \cap \Z^d \setminus \{o\}$; see also Fig.~\ref{fig:col:vec} for an illustration. 

\begin{figure}
	\begin{center}
				\begin{tikzpicture}[scale=0.6]
%		\draw (1,2) -- (0,2) -- (-2,0) -- (-2,-1) -- (-1,-2) -- (0,-2) -- (2,0) -- (2,1) -- cycle;
		\filldraw[fill=myGreen] (0,0) -- (1,1) -- (1,2) -- (0,2) -- (-1,1) -- cycle;
		\draw (0,0) -- (-1,-1) -- (-1,-2) -- (0,-2) -- (1,-1) -- cycle;
		\foreach \x in {-2,...,2}
		\foreach \y in {-2,...,2}
		\fill (\x,\y) circle (0.07);
		\node at (0.4,0) {\scriptsize{$o$}};		
		\node at (0.4,1.5) {\scriptsize{$K$}};
		\node at (-0.4,-1.5) {\scriptsize{$-K$}};
		\end{tikzpicture}
		\hspace{4em}		
		\begin{tikzpicture}[scale=0.6]
		\draw (1,2) -- (0,2) -- (-2,0) -- (-2,-1) -- (-1,-2) -- (0,-2) -- (2,0) -- (2,1) -- cycle;
		\foreach \x in {-2,...,2}
		\foreach \y in {-2,...,2}
		\fill (\x,\y) circle (0.07);
		\draw[->] (0,0) -- (1,0);
		\draw[->] (0,0) -- (1,1);
		\draw[->] (0,0) -- (0,1);
		\draw[->] (0,0) -- (-1,0);
		\draw[->] (0,0) -- (-1,-1);
		\draw[->] (0,0) -- (0,-1);
		\node at (0.5,1.4) {\scriptsize{$K-K$}};
		%			
		%\node at (0.4,0) {\scriptsize{$o$}};		
		\end{tikzpicture}
		\\
		\vspace{5ex}
		\begin{tikzpicture}[scale=0.4]
		\filldraw[fill=myGreen] (-1,0) -- (0,-1) -- (1,0) -- (1,1) -- (0,1) -- cycle;
		\foreach \x in {-2,...,2}
		\foreach \y in {-2,...,2}
		\fill (\x,\y) circle (0.07);
		\def\u{1}
		\def\v{0}
		\filldraw[fill=yellow,opacity=0.5] ({\u+(-1)},{\v+0}) -- ({\u+0},{\v-1}) -- ({\u+1},{\v+0}) -- ({\u+1},{\v+1}) -- ({\u+0},{\v+1}) -- cycle;
		\draw[->] (-2,2) -- (-1,2);
		\end{tikzpicture}
		\hspace{1.5em}
		\begin{tikzpicture}[scale=0.4]
		\filldraw[fill=myGreen] (-1,0) -- (0,-1) -- (1,0) -- (1,1) -- (0,1) -- cycle;
		\foreach \x in {-2,...,2}
		\foreach \y in {-2,...,2}
		\fill (\x,\y) circle (0.07);
		\def\u{1}
		\def\v{1}
		\filldraw[fill=yellow,opacity=0.5]({\u+(-1)},{\v+0}) -- ({\u+0},{\v-1}) -- ({\u+1},{\v+0}) -- ({\u+1},{\v+1}) -- ({\u+0},{\v+1}) -- cycle;
		\draw[->] (-2,1) -- (-1,2);
		\end{tikzpicture}		
		\hspace{1.5em}
		\begin{tikzpicture}[scale=0.4]
		\filldraw[fill=myGreen] (-1,0) -- (0,-1) -- (1,0) -- (1,1) -- (0,1) -- cycle;
		\foreach \x in {-2,...,2}
		\foreach \y in {-2,...,2}
		\fill (\x,\y) circle (0.07);
		\def\u{0}
		\def\v{1}
		\filldraw[fill=yellow,opacity=0.5]({\u+(-1)},{\v+0}) -- ({\u+0},{\v-1}) -- ({\u+1},{\v+0}) -- ({\u+1},{\v+1}) -- ({\u+0},{\v+1}) -- cycle;
		\draw[->] (-2,1) -- (-2,2);
		\end{tikzpicture}			
		\hspace{1.5em}
		\begin{tikzpicture}[scale=0.4]
		\filldraw[fill=myGreen] (-1,0) -- (0,-1) -- (1,0) -- (1,1) -- (0,1) -- cycle;
		\foreach \x in {-2,...,2}
		\foreach \y in {-2,...,2}
		\fill (\x,\y) circle (0.07);
		\def\u{-1}
		\def\v{0}
		\filldraw[fill=yellow,opacity=0.5]({\u+(-1)},{\v+0}) -- ({\u+0},{\v-1}) -- ({\u+1},{\v+0}) -- ({\u+1},{\v+1}) -- ({\u+0},{\v+1}) -- cycle;
		\draw[->] (-1,2) -- (-2,2);
		\end{tikzpicture}			
		\hspace{1.5em}
		\begin{tikzpicture}[scale=0.4]
		\filldraw[fill=myGreen] (-1,0) -- (0,-1) -- (1,0) -- (1,1) -- (0,1) -- cycle;
		\foreach \x in {-2,...,2}
		\foreach \y in {-2,...,2}
		\fill (\x,\y) circle (0.07);
		\def\u{-1}
		\def\v{-1}
		\filldraw[fill=yellow,opacity=0.5]({\u+(-1)},{\v+0}) -- ({\u+0},{\v-1}) -- ({\u+1},{\v+0}) -- ({\u+1},{\v+1}) -- ({\u+0},{\v+1}) -- cycle;
		\draw[->] (-1,2) -- (-2,1);
		\end{tikzpicture}			
		\hspace{1.5em}
		\begin{tikzpicture}[scale=0.4]
		\filldraw[fill=myGreen] (-1,0) -- (0,-1) -- (1,0) -- (1,1) -- (0,1) -- cycle;
		\foreach \x in {-2,...,2}
		\foreach \y in {-2,...,2}
		\fill (\x,\y) circle (0.07);
		\def\u{0}
		\def\v{-1}
		\filldraw[fill=yellow,opacity=0.5]({\u+(-1)},{\v+0}) -- ({\u+0},{\v-1}) -- ({\u+1},{\v+0}) -- ({\u+1},{\v+1}) -- ({\u+0},{\v+1}) -- cycle;
		\draw[->] (-2,2) -- (-2,1);
		\end{tikzpicture}					
	\end{center}
	\caption{\label{fig:col:vec} For the pentagon $K$ with the vertices $(0,0),(1,1), (1,2), (0,2), (-1,1)$, the family $\cT(K)$ has six collision vectors $\pm (1,0), \pm (1,1), \pm (0,1)$. The figure depicts the collision vectors as non-zero lattice vectors in $\intr{K-K}$ and illustrates the respective collisions among members of $\cT(K)$.}
\end{figure}
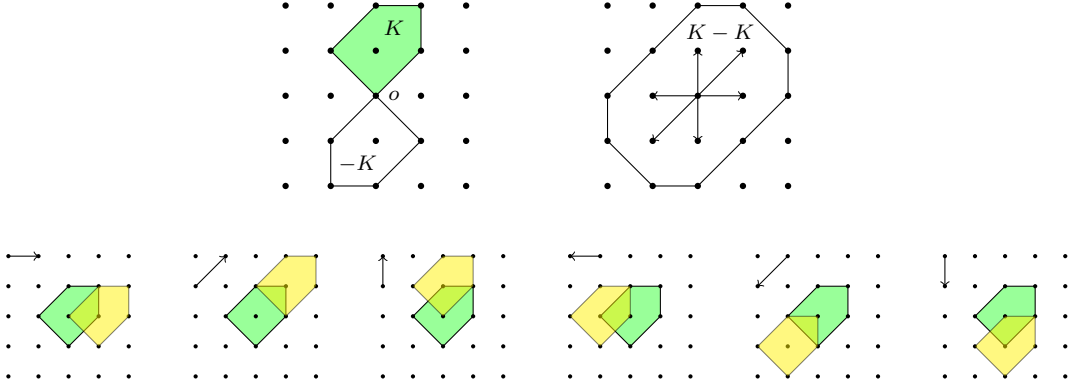

\section{Van der Corput's inequality and refinements}

\label{sec:basic}

This section presents several basic results from the geometry of numbers in a revised and refined form. While the content of this section is mostly not new, the presentation and proofs are somewhat different from the standard reference books \cite{MR893813} and \cite{MR1434478}. For reader's convenience, we give a self-contained presentation. While several of the presented results are known to hold for sets which are not necessarily convex, we prefer to keep the focus on the family of convex sets. 

We use the approach of Uhrin \cite{MR612682}, who showed that van der Corput's inequality can be deduced from bounds on the cardinality of the difference set: 

\begin{theorem}[Difference-set inequality and its equality case; {\cite[(2.4)]{MR1058212}}]
	\label{thm:dsi}
	Let $X \subseteq \R^d$ be a non-empty finite set. Then
	\(
		|X - X| \ge 2 |X| -1, 
	\)
	 and the equality $|X-X| = 2 |X| -1$ is attained if and only if $X$ is an arithmetic progression. 
\end{theorem}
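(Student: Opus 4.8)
The plan is to follow the approach, standard in additive combinatorics, of \emph{linearizing} the finite set $X$ by a generic linear functional and then splitting the difference set into its positive and negative parts. Concretely, since $X - X$ is finite, I would first fix a linear functional $\ell : \R^d \to \R$ that is injective on $X - X$; such an $\ell$ exists because the functionals failing injectivity lie in a finite union of hyperplanes of the dual space and hence do not exhaust it. Writing $n := |X|$ and ordering $X = \{x_1, \dots, x_n\}$ so that $\ell(x_1) < \cdots < \ell(x_n)$, I would consider the two chains of differences $x_i - x_1$ and $x_1 - x_i$ for $i = 1, \dots, n$. The first chain consists of $n$ vectors with pairwise distinct, nonnegative $\ell$-values, the second of $n$ vectors with nonpositive $\ell$-values, and the only vector common to both chains is the origin $x_1 - x_1 = o$. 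Hence $X - X$ contains at least $n + n - 1 = 2|X| - 1$ distinct vectors, which is the asserted inequality.

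For the equality case, the easy direction is to check that if $X$ is the arithmetic progression $\{c + j v : j = 0, \dots, n-1\}$ then $X - X = \{ jv : j = -(n-1), \dots, n-1\}$ has exactly $2n-1$ elements. The substantive direction assumes $|X - X| = 2n - 1$. Then the $2n - 1$ vectors exhibited above already exhaust $X - X$, so every difference is \emph{canonical}; in particular, for all $i > j$ the vector $x_i - x_j$ has positive $\ell$-value and must therefore coincide with some $x_k - x_1$. Setting $v_i := x_i - x_1$, so that $v_1 = o$, this reads as the vector identity $v_i - v_j = v_k$ whenever $i > j$.

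The heart of the argument is to upgrade these identities to the statement that the $v_i$ form an arithmetic progression. I would apply the relation to the differences $v_{i+1} - v_2$ for $i = 2, \dots, n-1$: each equals some $v_k$, and, reading off $\ell$-values, the $n-2$ strictly increasing quantities $\ell(v_{i+1}) - \ell(v_2)$ inject, in an order-preserving way, into the $n-2$ values $\ell(v_2), \dots, \ell(v_{n-1})$ (the largest among them, $\ell(v_n) - \ell(v_2)$, is still below $\ell(v_n)$, which rules out the index $n$). An order-preserving injection between two finite sets of equal cardinality is a bijection sending $i$-th smallest to $i$-th smallest, which forces $\ell(v_{i+1}) - \ell(v_2) = \ell(v_i)$; the injectivity of $\ell$ on $X - X$ then promotes this to the vector equation $v_{i+1} = v_i + v_2$. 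Together with $v_1 = o$ this yields $v_i = (i-1) v_2$, that is $x_i = x_1 + (i-1)(x_2 - x_1)$, so $X$ is an arithmetic progression.

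I expect the main obstacle to be precisely this last step. Passing to the scalar image $\ell(X) \subseteq \R$ proves the inequality at once, but the fact that $\ell(X)$ is an arithmetic progression does \emph{not} by itself imply that $X$ is one. The device that overcomes this is to demand that $\ell$ be injective on the entire difference set $X - X$, not merely on $X$: this is what lets the scalar bijection argument be transported back to genuine vector equalities $v_{i+1} = v_i + v_2$ in $\R^d$, thereby recovering the affine-progression structure of $X$ itself. A minor point to dispose of at the outset is the degenerate range of small $n$ (the cases $n = 1$ and $n = 2$, where the claim is immediate and the index range $i = 2, \dots, n-1$ is empty).
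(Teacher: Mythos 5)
The paper does not prove Theorem~\ref{thm:dsi} at all: it is imported as a known result with a citation to Freiman, so there is no internal argument to compare yours against. Your self-contained proof is correct. The generic linearization is sound (the functionals failing to be injective on the finite set $X-X$ lie in finitely many hyperplanes of the dual space), the two chains $x_i-x_1$ and $x_1-x_i$ meeting only at $o$ give the lower bound $2|X|-1$, and the equality analysis holds up: under equality every positive-$\ell$ difference must lie in the chain $\{v_2,\dots,v_n\}$, the bounds $0<\ell(v_{i+1})-\ell(v_2)<\ell(v_n)$ confine the targets to $\{v_2,\dots,v_{n-1}\}$, and the order-preserving bijection between two $(n-2)$-element sets forces $\ell(v_{i+1})-\ell(v_2)=\ell(v_i)$, which injectivity of $\ell$ on $X-X$ upgrades to the vector identity $v_{i+1}=v_i+v_2$ and hence $v_i=(i-1)v_2$. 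You also correctly identify the one point where a careless version would fail --- injectivity on $X$ alone would only show that $\ell(X)$ is an arithmetic progression in $\R$, not that $X$ is one in $\R^d$ --- and you dispose of the degenerate cases $n\le 2$. This is a complete and essentially standard compression-type argument; the only remark worth adding is that the statement and your proof are really about finite subsets of an arbitrary torsion-free abelian group, the ambient $\R^d$ playing no role beyond supplying the separating functional.
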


\begin{lemma} \label{lem:wrapper}
	Let $A \subseteq \R^d$ be a $d$-dimensional bounded convex set. Then
	\[
		\vol(A) = \int\limits_{[0,1]^d} \mult(A,x) \dd x.
	\]
\end{lemma}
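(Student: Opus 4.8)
The plan is to start from the series representation of the multiplicity function recorded in \eqref{mult:c}, namely $\mult(A,x) = \sum_{z \in \Z^d} \chf_A(x+z)$, and to integrate this expression termwise over the unit cube. First I would substitute it into the right-hand side of the claimed identity to obtain
\[
	\int_{[0,1]^d} \mult(A,x) \dd x = \int_{[0,1]^d} \sum_{z \in \Z^d} \chf_A(x+z) \dd x.
\]
The crux of the argument is then to interchange the integration and the summation. Although swapping a countable sum with an integral is in general a matter requiring care, here the justification is elementary: since $A$ is bounded, $\chf_A$ has compact support, so only finitely many $z \in \Z^d$ satisfy $(A - z) \cap [0,1]^d \ne \emptyset$, and hence only finitely many summands contribute nontrivially to the integral. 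The interchange therefore reduces to exchanging a \emph{finite} sum with an integral and needs no convergence theorem. (Alternatively, since the integrand is nonnegative, the interchange is immediate from Tonelli's theorem.)

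After the interchange I would invoke the translation invariance of the Lebesgue measure: the substitution $y = x+z$ turns each summand into an integral over the translated cube,
\[
	\int_{[0,1]^d} \chf_A(x+z) \dd x = \int_{[0,1]^d + z} \chf_A(y) \dd y.
\]
The concluding step is to observe that the translated cubes $\setcond{[0,1]^d + z}{z \in \Z^d}$ cover $\R^d$ and that distinct cubes meet only along their boundaries, which carry Lebesgue measure zero. Summing the contributions over all $z \in \Z^d$ thus reassembles the integral over all of $\R^d$, so that
\[
	\sum_{z \in \Z^d} \int_{[0,1]^d + z} \chf_A(y) \dd y = \int_{\R^d} \chf_A(y) \dd y = \vol(A),
\]
which is the desired equality.

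I do not expect a genuine obstacle: the only point demanding attention is the legitimacy of the termwise integration, and the boundedness of $A$ reduces this to a finite rearrangement rather than an analytic limit. Conceptually, the identity simply expresses that integrating the $\Z^d$-periodic function $\mult(A,\dotvar)$ — the periodization of $\chf_A$ — over the fundamental domain $[0,1]^d$ of $\Z^d$ recovers the integral of $\chf_A$ over the whole space. I note that convexity and $d$-dimensionality of $A$ are not actually used beyond guaranteeing measurability and $\vol(A) > 0$; boundedness is the only property the proof genuinely exploits.
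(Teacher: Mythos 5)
Your proof is correct and is essentially the paper's own argument run in reverse: both decompose $\R^d$ into the lattice translates of $[0,1]^d$, exchange the sum with the integral, and identify the resulting series with $\mult(A,\dotvar)$ via \eqref{mult:c}. Your explicit justification of the interchange (only finitely many $z$ contribute because $A$ is bounded) is a point the paper leaves implicit, but it is the same proof.
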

\begin{proof}
	\begin{align*}
	\vol(A) & = \int_{\R^d} \chf_A(x) \dd x 
	\\ & = \sum_{z \in \Z^d} \int_{[0,1]^d} \chf_A(x+z) \dd x & & \text{(by tiling $\R^d$ into translates of $[0,1]^d$)}
	\\ & =  \int_{[0,1]^d} \sum_{z \in \Z^d} \chf_A(x+z) \dd x & & \text{(exchanging the order  $\sum$ and $\int$)}
	\\ & = \int_{[0,1]^d} \mult(A,x) \dd x & & \text{(using \eqref{mult:c}).}
	\end{align*}
\end{proof}

\begin{theorem}[On collision vectors and $m$-fold packings]
	\label{coll:mult:packings}
	Let $K \in \cK^d$.
	Consider the set
	\(
		Z:= \intr{K-K} \cap \Z^d \setminus \{o\}
	\)
	of the collision vectors of the family $\cT(K)$. Then $\cT(K)$ is an $m$-fold packing with $m= \frac{1}{2} |Z|+1$, when $Z$ is arbitrary, and $m=\frac{1}{2} |Z|$, when $\dim(Z) \ge 2$. 
\end{theorem}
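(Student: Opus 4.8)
The plan is to bound, for each point $x \in \R^d$, the quantity $\mult(\intr{K},x)$, since by the packing criterion recorded in Section~\ref{sect:background}, $\cT(K)$ is an $m$-fold packing precisely when $\mult(\intr{K},x) \le m$ holds for all $x$. First I would fix $x \in \R^d$ and, using \eqref{mult:b}, write $\mult(\intr{K},x) = |S|$ with $S := \intr{K} \cap (x + \Z^d)$. If $S = \emptyset$ the asserted bounds are trivial, so I may assume $S \ne \emptyset$; then $S$ is a non-empty finite set and Theorem~\ref{thm:dsi} applies to it.

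The key observation is that $S - S \subseteq Z \cup \{o\}$. Indeed, any two points $s, s' \in S$ lie in $x + \Z^d$, so $s - s' \in \Z^d$; and they lie in $\intr{K}$, so $s - s' \in \intr{K} - \intr{K}$. Applying \eqref{int:sum} to $K$ and $-K$ gives $\intr{K} - \intr{K} = \intr{K - K}$, whence $s - s' \in \intr{K-K} \cap \Z^d = Z \cup \{o\}$ (note $o \in \intr{K-K}$ since $K-K$ is an $o$-symmetric convex body). Consequently $|S - S| \le |Z| + 1$. Combining this with the difference-set inequality $|S - S| \ge 2|S| - 1$ of Theorem~\ref{thm:dsi} yields $2|S| - 1 \le |Z| + 1$, that is, $|S| \le \frac{1}{2}|Z| + 1$. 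As $x$ was arbitrary, $\cT(K)$ is a $(\frac{1}{2}|Z|+1)$-fold packing, which settles the first claim; here $\frac{1}{2}|Z|+1$ is an integer because the collision vectors come in pairs $\pm z$, so $|Z|$ is even.

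For the refinement under $\dim(Z) \ge 2$, I would argue that the bound above can never be attained with equality. Suppose that $|S| = \frac{1}{2}|Z| + 1$ for some $x$. Then both inequalities used above must be equalities: from $|S - S| = 2|S| - 1$ and the equality case of Theorem~\ref{thm:dsi}, the set $S$ is an arithmetic progression; and from $|S - S| = |Z| + 1$ together with $S - S \subseteq Z \cup \{o\}$ and $o \notin Z$, the finiteness and equal cardinality force $S - S = Z \cup \{o\}$. But the difference set of an arithmetic progression is contained in the line through the origin spanned by its common difference, so $Z \cup \{o\}$, and hence $Z$, lies in a line; thus $\dim(Z) \le 1$. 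By contraposition, when $\dim(Z) \ge 2$ one has $|S| < \frac{1}{2}|Z| + 1$, hence $|S| \le \frac{1}{2}|Z|$ (the quantities being integers), for every $x$; so $\cT(K)$ is a $(\frac{1}{2}|Z|)$-fold packing.

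The argument is short once the two ingredients are in place; the only point requiring care is the equality analysis, where one must combine the two separate equality conditions — that $S$ is an arithmetic progression and that $S - S$ exhausts $Z \cup \{o\}$ — to deduce the collinearity of $Z$ that contradicts $\dim(Z) \ge 2$. I would also verify the degenerate possibilities (for instance $|S| \le 1$, or $S = \emptyset$), which only render the bounds easier and therefore pose no obstacle.
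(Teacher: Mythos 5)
Your proposal is correct and follows essentially the same route as the paper: both rest on the inclusion $S-S \subseteq Z \cup \{o\}$ (via \eqref{int:sum}) together with the difference-set inequality and its equality case from Theorem~\ref{thm:dsi}. The only difference is organizational: for $\dim(Z)\ge 2$ you run a single equality-case analysis and conclude by integrality of $\frac{1}{2}|Z|+1$, whereas the paper splits into the cases where $A_x$ is or is not an arithmetic progression, using a parity argument in the latter; both versions are sound.
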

\begin{proof}
	Let $A:= \intr{K}$. For $x \in \R^d$, we introduce the set $A_x:= A \cap (x+\Z^d)$. In view of \eqref{mult:b}, one has $\mult(A,x)=|A_x|$. Since $A_x$ is a subset of both $A$ and $x+\Z^d$, the difference set $A_x-A_x$ is a subset of $A-A = \intr{K-K}$ and $(x+\Z^d) - (x+\Z^d) = \Z^d$. This yields the inclusion
	\begin{align}
	A_x - A_x  & \subseteq Z \cup \{o\}.\label{diff:Ax:eq}
	\end{align}
	Hence
	\begin{align*}
		\mult(A,x) & =|A_x|  & & \text{(by \eqref{mult:b})}
		\\ & \le \frac{1}{2} (|A_x-A_x|+1) & & \text{(by Theorem~\ref{thm:dsi})}
			\\ & \le \frac{1}{2} |Z|+1 & & \text{(by \eqref{diff:Ax:eq})}.
	\end{align*} 
	We have verified the assertion for an arbitrary $Z$. In the case $\dim(Z) \ge 2$, we need to check the stronger inequality $\mult(A,x) \le \frac{1}{2} |Z|$. If $A_x = \emptyset$, the latter inequality holds. If $A_x$ is non-empty and is not an arithmetic progression, Theorem~\ref{thm:dsi} yields the strict inequality 
	\(
	|A_x - A_x| > 2 |A_x| - 1.
	\)
	Both the left and the right hand side of the latter strict inequality are odd numbers. Thus, the inequality can be reformulated as 
	\(
		|A_x  - A_x| \ge 2 |A_x| + 1.
	\)
	This implies 
	\begin{align*}
		\mult(A,x) = |A_x| & \le \frac{1}{2}(|A_x-A_x| -1) 
		 \le \frac{1}{2} |Z|.
	\end{align*}
	In the case when $A_x$ is an arithmetic progression, we have $\dim(A_x-A_x)=1$. 
	Since both $A_x-A_x$ and $Z$ are $o$-symmetric with $\dim(A_x-A_x)=1$ and $\dim(Z) \ge 2$, we conclude that $Z$ contains a pair of points symmetric with respect to the origin that are not in $A_x-A_x$. Furthermore, since $A_x$ is an arithmetic progression, we have the equality 
	\(
	|A_x| = \frac{1}{2}(|A_x-A_x|+1).
	\) The above observations imply
	\[
		\mult(A,x) = |A_x| = \frac{1}{2}(|A_x-A_x| + 1) \le \frac{1}{2} |Z|.
	\]
\end{proof}

\begin{proposition}
	\label{multiple:pack:cov:vol} 
	Let $m \in \N$ and $K \in \cK^d$. Then the following hold:
		\begin{enumerate}[(a)]
			\item \label{pack:vol} $\vol(K) \le m$ if $\cT(K)$ is an $m$-fold packing.
			\item \label{cover:vol} $\vol(K) \ge m$ if $\cT(K)$ is an $m$-fold covering.
			\item \label{tiling:vol} $\vol(K)=m$ if $\cT(K)$ is an $m$-fold tiling. 
		\end{enumerate}
\end{proposition}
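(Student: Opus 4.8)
The plan is to derive all three parts directly from the integral identity of Lemma~\ref{lem:wrapper}, combined with the reformulations of the packing and covering properties in terms of the multiplicity function that were recorded just before the statement. The only points requiring mild care are that Lemma~\ref{lem:wrapper} is phrased for a $d$-dimensional bounded convex set, so I will apply it once to $K$ and once to its interior $\intr{K}$, and that I will silently use $\vol(\intr{K}) = \vol(K)$, which holds because the boundary of a convex body is a Lebesgue null set.

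For part~\eqref{pack:vol} I would start from the fact that, by definition, $\cT(K)$ being an $m$-fold packing means $\mult(\intr{K},x) \le m$ for all $x \in \R^d$. Since $\intr{K}$ is a $d$-dimensional bounded convex set, Lemma~\ref{lem:wrapper} applied to $A := \intr{K}$ gives
\[
\vol(\intr{K}) = \int_{[0,1]^d} \mult(\intr{K},x)\,\dd x \le \int_{[0,1]^d} m\,\dd x = m,
\]
and since $\vol(\intr{K}) = \vol(K)$ this yields $\vol(K) \le m$. For part~\eqref{cover:vol} the argument is symmetric: an $m$-fold covering means $\mult(K,x) \ge m$ for all $x$, so applying Lemma~\ref{lem:wrapper} this time to $A := K$ produces
\[
\vol(K) = \int_{[0,1]^d} \mult(K,x)\,\dd x \ge \int_{[0,1]^d} m\,\dd x = m.
\]

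Part~\eqref{tiling:vol} then follows with no additional work, since an $m$-fold tiling is by definition both an $m$-fold packing and an $m$-fold covering; combining the two inequalities forces $\vol(K) = m$. I do not anticipate any genuine obstacle here, as the proposition is essentially a corollary of Lemma~\ref{lem:wrapper}. The one subtlety worth flagging is the distinction between $K$ and $\intr{K}$: the packing condition controls $\mult(\intr{K},\dotvar)$ whereas the covering condition controls $\mult(K,\dotvar)$, and applying Lemma~\ref{lem:wrapper} to the correct one of these two sets in each case is precisely what makes each bound come out with the right direction.
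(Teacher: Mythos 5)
Your proposal is correct and follows exactly the same route as the paper: apply Lemma~\ref{lem:wrapper} to $\intr{K}$ for the packing bound and to $K$ for the covering bound, using $\vol(\intr{K})=\vol(K)$, and obtain the tiling case by combining the two inequalities. The subtlety you flag about matching $K$ versus $\intr{K}$ to the respective condition is precisely the point the paper's proof also relies on.
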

\begin{proof}
		\eqref{pack:vol}: If $\cT(K)$ is an $m$-fold packing, then using Lemma~\ref{lem:wrapper} we get 
		\[
		\vol(K) = \vol(\intr{K}) = \int_{[0,1]^d} \underbrace{\mult(\intr{K},x)}_{\le m} \dd x \le m.
		\]
		\eqref{cover:vol}: Analogously, if $\cT(K)$ is an $m$-fold covering, then using Lemma~\ref{lem:wrapper} we get 
		\[
		\vol(K) = \int_{[0,1]^d} \underbrace{\mult(K,x)}_{\ge m} \dd x \ge m.
		\]
		Assertion \eqref{tiling:vol} is a direct consequence of \eqref{pack:vol} and \eqref{cover:vol}.
\end{proof}

Theorem~\ref{thm:tiling:char} below is a simple characterization of $m$-fold tilings. In the proof of the characterization, we use the following lemma.

\begin{lemma}
	\label{lem:semicont}
	Let $A \subseteq \R^d$ be bounded. Then the following hold: 
	\begin{enumerate}[(a)]
		\item \label{open:semicont} If $A$ is open, then $\mult(A,x) \ge \mult(A,x^\ast)$ holds for all $x$ in a neighborhood of $x^\ast$.
		\item \label{closed:semicont} If $A$ is closed, then $\mult(A,x) \le \mult(A,x^\ast)$ holds for all $x$ in a neighborhood of $x^\ast$. 
	\end{enumerate}
\end{lemma}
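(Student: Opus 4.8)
The plan is to read $\mult(A,\dotvar)$ through formula \eqref{mult:c} as a sum of indicator contributions, one per lattice vector, and to exploit that over a bounded neighborhood of $x^\ast$ only finitely many lattice vectors can ever contribute. Concretely, I would fix a bounded neighborhood $U$ of $x^\ast$. Since $A$ is bounded, the set $A-U$ is bounded, so $F := \Z^d \cap (A-U)$ is finite; moreover, for every $x \in U$ the sum in \eqref{mult:c} collapses to the finite sum $\mult(A,x) = \sum_{z \in F} \chf_A(x+z)$, because any $z \in \Z^d$ with $x+z \in A$ necessarily satisfies $z = (x+z) - x \in A - U$, hence $z \in F$. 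Thus it suffices to control the finitely many conditions ``$x+z \in A$'', $z \in F$, as $x$ ranges near $x^\ast$.

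For the open case (part (a)), set $F^+ := \setcond{z \in F}{x^\ast + z \in A}$, so that $|F^+| = \mult(A,x^\ast)$. For each $z \in F^+$ the point $x^\ast + z$ lies in the open set $A$, hence $x + z \in A$ for all $x$ in some neighborhood $U_z$ of $x^\ast$. Intersecting $U$ with the finitely many $U_z$ produces a single neighborhood on which every $z \in F^+$ still contributes, whence $\mult(A,x) \ge |F^+| = \mult(A,x^\ast)$ there.

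For the closed case (part (b)), I would argue dually using the complement. Put $F^- := \setcond{z \in F}{x^\ast + z \notin A}$. For each $z \in F^-$ the point $x^\ast + z$ lies in the open set $\R^d \setminus A$, so $x + z \notin A$ for all $x$ in some neighborhood of $x^\ast$. Intersecting $U$ with these finitely many neighborhoods gives a neighborhood on which no $z \in F^-$ contributes; consequently, for such $x$, every contributing $z \in F$ already contributed at $x^\ast$, and therefore $\mult(A,x) \le \mult(A,x^\ast)$.

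The argument is elementary, and the only point that genuinely requires care is guaranteeing that the index set of potentially contributing lattice vectors is \emph{finite and uniform} across an entire neighborhood of $x^\ast$, rather than varying with $x$. This is precisely what boundedness of $A$ supplies through the finite set $F$; once $F$ is fixed, both inequalities follow immediately from openness of $A$ (respectively, of its complement) together with the finiteness of $F^+$ (respectively, $F^-$).
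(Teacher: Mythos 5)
Your proof is correct and follows essentially the same route as the paper: boundedness of $A$ reduces everything to finitely many relevant lattice translates, and then openness of $A$ (for part (a)) or of its complement (for part (b)) yields a uniform neighborhood of $x^\ast$ by a finite intersection. The paper phrases this in terms of the translates $A+z$ rather than the indicator sum, but the argument is the same.
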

\begin{proof}
	Since $A$ is bounded, only finitely many  members of $\cT(A)$ contain $x^\ast$. Hence, if $A$ is open, then $U := \bigcap \setcond{A+z}{z \in \Z^d, \ x^\ast \in A+z}$ is an open set containing $x^\ast$. By construction, $\mult(A,x) \ge \mult(A,x^\ast)$ holds for every $x \in U$. Thus, \eqref{open:semicont} is true.
	
	Assume now that $A$ is closed. We fix an arbitrary open bounded neighborhood $W$ of $x^\ast$. Since $A$ is bounded, only finitely many members of $\cT(A)$ have a non-empty intersection with $W$. Hence $U:=W \setminus \bigcup \setcond{A+z}{z \in \Z^d, x^\ast \not\in A+z}$ is an open neighborhood of $x^\ast$. By construction, $\mult(A,x) \le \mult(A,x^\ast)$ holds for all $x \in U$. This shows that \eqref{closed:semicont} is true.
\end{proof}

\begin{theorem}
	\label{thm:tiling:char}
	Let $m \in \N$ and $K \in \cK^d$. Then the following conditions are equivalent: 
	\begin{enumerate}[(i)]
		\item \label{pack:vol=m} $\cT(K)$ is an $m$-fold packing with $\vol(K)=m$.
		\item \label{cov:vol=m} $\cT(K)$ is an $m$-fold covering with $\vol(K) =m$.
		\item \label{tiling} $\cT(K)$ is an $m$-fold tiling.
	\end{enumerate} 
\end{theorem}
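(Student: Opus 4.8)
The plan is to establish the equivalence by proving that (iii) trivially implies each of (i) and (ii), and that each of (i) and (ii) in turn implies (iii); together these yield the equivalence of all three conditions. The two trivial directions are immediate: if $\cT(K)$ is an $m$-fold tiling, then by definition it is simultaneously an $m$-fold packing and an $m$-fold covering, while Proposition~\ref{multiple:pack:cov:vol}\eqref{tiling:vol} supplies $\vol(K)=m$, which is exactly what (i) and (ii) assert. The substance of the theorem therefore lies in the two converse implications, and both follow the same two-step pattern: first upgrade the volume identity $\vol(K)=m$ to an almost-everywhere statement about the multiplicity function via Lemma~\ref{lem:wrapper}, and then promote this to an everywhere inequality via the one-sided semicontinuity recorded in Lemma~\ref{lem:semicont}.

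For the implication (i) $\Rightarrow$ (iii), I would start from the packing hypothesis $\mult(\intr{K},x)\le m$ together with $\vol(K)=m$. Since $\intr{K}$ is a $d$-dimensional bounded convex set with $\vol(\intr{K})=\vol(K)=m$, Lemma~\ref{lem:wrapper} gives $\int_{[0,1]^d}\mult(\intr{K},x)\dd x=m$; as the integrand is bounded above by $m$ on a domain of volume $1$, it must equal $m$ for almost every $x$, and hence, by $\Z^d$-periodicity of the multiplicity, on a subset of $\R^d$ of full measure. To deduce the covering property, fix an arbitrary $x^\ast$. Because $K$ is closed, Lemma~\ref{lem:semicont}\eqref{closed:semicont} furnishes a neighborhood $U$ of $x^\ast$ on which $\mult(K,x)\le\mult(K,x^\ast)$. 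Choosing a point $x\in U$ with $\mult(\intr{K},x)=m$ (possible because such points are dense) and using $\mult(\intr{K},x)\le\mult(K,x)$, I obtain $m=\mult(\intr{K},x)\le\mult(K,x)\le\mult(K,x^\ast)$, so $\mult(K,x^\ast)\ge m$. As $x^\ast$ is arbitrary, $\cT(K)$ is an $m$-fold covering, hence an $m$-fold tiling.

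The implication (ii) $\Rightarrow$ (iii) is dual. From the covering hypothesis $\mult(K,x)\ge m$ and $\vol(K)=m$, Lemma~\ref{lem:wrapper} applied to $K$ gives $\int_{[0,1]^d}\mult(K,x)\dd x=m$ with integrand bounded below by $m$, so $\mult(K,x)=m$ for almost every $x$, and again on a full-measure subset of $\R^d$. Fixing $x^\ast$ and using that $\intr{K}$ is open, Lemma~\ref{lem:semicont}\eqref{open:semicont} provides a neighborhood $U$ of $x^\ast$ on which $\mult(\intr{K},x)\ge\mult(\intr{K},x^\ast)$. Picking $x\in U$ with $\mult(K,x)=m$ and invoking $\mult(\intr{K},x)\le\mult(K,x)$ yields $\mult(\intr{K},x^\ast)\le\mult(\intr{K},x)\le\mult(K,x)=m$, establishing the packing property and hence the tiling.

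The main obstacle, and the only genuinely non-formal point, is the passage from almost-everywhere equality of the multiplicity to an everywhere inequality in the correct direction: this is precisely where the direction of the semicontinuity must be matched to whether one is working with the open set $\intr{K}$ or the closed set $K$. Invoking the wrong half of Lemma~\ref{lem:semicont} would compare the value at $x^\ast$ with values that may jump the wrong way, so the care lies in pairing the closed-set semicontinuity with the covering conclusion and the open-set semicontinuity with the packing conclusion. The measure-theoretic step is routine once one notes that $\vol(\intr{K})=\vol(K)$ and that the integral is taken over a set of volume $1$.
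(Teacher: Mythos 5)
Your proposal is correct and uses essentially the same ingredients as the paper's proof: Lemma~\ref{lem:wrapper} to relate $\vol(K)$ to the integral of the multiplicity function, and Lemma~\ref{lem:semicont} with the same careful pairing of closed-set semicontinuity with the covering conclusion and open-set semicontinuity with the packing conclusion. The only (immaterial) difference is that you argue directly --- deducing $\mult=m$ almost everywhere and then controlling an arbitrary point by a nearby generic one --- whereas the paper runs the contrapositive, spreading a point of deficient (or excessive) multiplicity to an open set of positive measure and concluding $\vol(K)\ne m$.
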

\begin{proof}
	\eqref{pack:vol=m} $\Rightarrow$ \eqref{cov:vol=m}: Assume \eqref{pack:vol=m} is true. If $\cT(K)$ is not an $m$-fold covering, then there exists $x^\ast$ such that $\mult(K,x^\ast) \le m-1$. By Lemma~\ref{lem:semicont}\eqref{closed:semicont}, $\mult(K,x) \le m-1$ holds for all $x$ in an open neighborhood $U$ of $x^\ast$. We fix $z \in \Z^d$ such that $[0,1]^d+z$ and $U$ have a non-empty intersection. Fix a non-empty  open subset $W \subseteq ([0,1]^d + z) \cap U$. In view of the $\Z^d$-periodicity of $\mult(K,\dotvar)$, we have $\mult(K,x) \le m-1$ for all $x \in W$. Hence
	\begin{align*}
	\vol(K)  & = \int_{[0,1]^d} \mult(\intr{K},x) \dd x 	& & \text{(by Lemma~\ref{lem:wrapper})}
	\\ & = \int_{W} \underbrace{\mult(\intr{K},x)}_{\le m-1} \dd x + \int_{[0,1]^d \setminus W} \underbrace{\mult(\intr{K},x)}_{\le m} \dd x & & \text{(decomposing $[0,1]^d$)}
	\\	& \le (m-1) \vol(W) + m (1 - \vol(W)) & & %\text{(estimating the integrand)}
	\\ & = m - \vol(W).
	\end{align*}
	We obtain the inequality $\vol(K) < m$, contradicting $\vol(K)=m$. This shows that \eqref{pack:vol=m} implies \eqref{cov:vol=m}.
		
	\eqref{cov:vol=m} $\Rightarrow$ \eqref{pack:vol=m}:	Assume \eqref{cov:vol=m} is true. Let $A:=\intr{K}$. If $\cT(K)$ is not an $m$-fold packing, then there exists $x^\ast$ with $\mult(A,x^\ast) \ge m+1$. By Lemma~\ref{lem:semicont}\eqref{open:semicont}, $\mult(A,x) \ge m+1$ holds for all $x$ in an open neighborhood $U$ of $x^\ast$. Since the function $\mult(A,\dotvar)$ is $\Z^d$-periodic, analogously to the proof of the implication \eqref{pack:vol=m} $\Rightarrow$ \eqref{cov:vol=m}, one can fix a non-empty open set $W \subseteq [0,1]^d$ with $\mult(A,x) \ge m+1$ of every $x \in W$. We get
	\begin{align*}
	\vol(K) 
	 & = \int_{[0,1]^d} \mult(K,x) \dd x & & \text{(by Lemma~\ref{lem:wrapper})}
	\\ & = \int_{W} \underbrace{\mult(K,x)}_{\ge m+1} \dd x + \int_{[0,1]^d \setminus W} \underbrace{\mult(K,x)}_{\ge m} \dd x & & \text{(decomposing $[0,1]^d$)}
	\\ & \ge (m+1) \vol(W) + m (1-\vol(W)) & & %\text{(estimating the integrand)}
	\\ & = \vol(W) + m.
	\end{align*}
	We obtain $\vol(K) > m$, contradicting $\vol(K)=m$. Thus, \eqref{cov:vol=m} implies \eqref{pack:vol=m}.
	
	It remains to check the equivalence of \eqref{tiling} and the other two conditions. By Proposition~\ref{multiple:pack:cov:vol}\eqref{tiling:vol}, condition \eqref{tiling} implies both \eqref{pack:vol=m} and \eqref{cov:vol=m}. Furthermore, if \eqref{pack:vol=m} or \eqref{cov:vol=m} is true, then due to their equivalence, both of them are true. But then \eqref{tiling} is also true.
\end{proof}

\begin{theorem}[Van der Corput's inequality and refinements]
	\label{thm:vdc:ineq}
	Let $m \in \N$ and $C \in \cKo^d(2m-1)$. Then the following hold:
	\begin{enumerate}[(a)]
		\item \label{vdc} 
		\(
			\vol(C) \le m 2^d.
		\)
		\item \label{vdc:modified} $\vol(C) \le (m-1) 2^d$, if $\dim(\Z^d \cap \intr{C}) \ge 2$.
		\item \label{vdc:eq:tiling} $\vol(C) = m 2^d$ holds if and only if $\cT(\frac{1}{2} C)$ is an $m$-fold tiling.
	\end{enumerate} 
\end{theorem}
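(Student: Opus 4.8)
The plan is to follow Uhrin's reduction: set $K := \frac{1}{2} C$, so that $\vol(C) = 2^d \vol(K)$, and convert every statement about $C$ into a statement about the packing and tiling behaviour of $\cT(K)$. The computational heart of the argument is the identity $K - K = C$. Indeed, since $C$ is $o$-symmetric we have $C - C = C + C = 2C$ by convexity, whence $K - K = \frac{1}{2}(C-C) = C$. Consequently $\intr{K-K} = \intr{C}$, and the set of collision vectors of $\cT(K)$ is exactly $Z = \intr{C} \cap \Z^d \setminus \{o\}$. Because $C \in \cKo^d$ has nonempty interior, the origin is an interior point, so $o$ is one of the $2m-1$ interior lattice points of $C$; this gives $\card{Z} = 2m-2$, and therefore $\frac{1}{2}\card{Z}+1 = m$ while $\frac{1}{2}\card{Z} = m-1$.

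For \eqref{vdc}, I would apply Theorem~\ref{coll:mult:packings} in its general form: $\cT(K)$ is an $m$-fold packing with $m = \frac{1}{2}\card{Z}+1$. Proposition~\ref{multiple:pack:cov:vol}\eqref{pack:vol} then yields $\vol(K) \le m$, i.e. $\vol(C) = 2^d \vol(K) \le m 2^d$. For \eqref{vdc:modified}, I would first note that $\Z^d \cap \intr{C} = Z \cup \{o\}$, and since $Z$ is $o$-symmetric one has $o \in \conv(Z)$ whenever $Z \ne \emptyset$, so $\dim(Z) = \dim(\Z^d \cap \intr{C})$. Thus the hypothesis $\dim(\Z^d \cap \intr{C}) \ge 2$ is the same as $\dim(Z) \ge 2$, which puts us in the sharper branch of Theorem~\ref{coll:mult:packings}: $\cT(K)$ is then an $(m-1)$-fold packing with $m-1 = \frac{1}{2}\card{Z}$. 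Proposition~\ref{multiple:pack:cov:vol}\eqref{pack:vol} gives $\vol(K) \le m-1$, hence $\vol(C) \le (m-1)2^d$.

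For \eqref{vdc:eq:tiling}, both implications are immediate once the packing statement is in hand. If $\cT(K)$ is an $m$-fold tiling, then Proposition~\ref{multiple:pack:cov:vol}\eqref{tiling:vol} gives $\vol(K) = m$, so $\vol(C) = m 2^d$. Conversely, suppose $\vol(C) = m 2^d$, i.e. $\vol(K) = m$. By the argument for \eqref{vdc}, $\cT(K)$ is an $m$-fold packing, so condition \eqref{pack:vol=m} of Theorem~\ref{thm:tiling:char} holds; by the equivalence \eqref{pack:vol=m} $\Leftrightarrow$ \eqref{tiling} there, $\cT(K)$ is an $m$-fold tiling.

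The argument is essentially an assembly of the earlier results, so there is no deep obstacle; the parts requiring genuine care are the bookkeeping identities, namely the reduction $K-K = C$ (which hinges on $o$-symmetry), the correct count $\card{Z} = 2m-2$ (which relies on $o$ being an interior lattice point so that it is excluded from $Z$ but included in the total $2m-1$), and the dimension matching $\dim(Z) = \dim(\Z^d \cap \intr{C})$ needed to translate the hypothesis of \eqref{vdc:modified} into the form required by Theorem~\ref{coll:mult:packings}.
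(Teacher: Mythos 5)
Your proposal is correct and follows essentially the same route as the paper: identify the collision vectors of $\cT(\frac{1}{2}C)$ with $\intr{C}\cap\Z^d\setminus\{o\}$, invoke Theorem~\ref{coll:mult:packings} together with Proposition~\ref{multiple:pack:cov:vol}\eqref{pack:vol} for parts \eqref{vdc} and \eqref{vdc:modified}, and use Theorem~\ref{thm:tiling:char} for part \eqref{vdc:eq:tiling}. The paper leaves the bookkeeping (the identity $K-K=C$, the count $|Z|=2m-2$, and the dimension matching) implicit, whereas you spell it out; the substance is the same.
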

\begin{proof}
	The collision vectors of the family  $\cT(\frac{1}{2} C)$ are the vectors in $\intr{C} \cap \Z^d \setminus \{o\}$. Hence, $\cT(\frac{1}{2} C)$ has $2(m-1)$ collision vectors. Assertion \eqref{vdc} and \eqref{vdc:modified} follow by applying Theorem~\ref{coll:mult:packings} and then Proposition~\ref{multiple:pack:cov:vol}\eqref{pack:vol} to the family $\cT(\frac{1}{2}C)$. 

	It remains to verify \eqref{vdc:eq:tiling}. If $\cT(\frac{1}{2}C)$ is an $m$-fold tiling, then $\vol(C) = m 2^d$ follows directly from Proposition~\ref{multiple:pack:cov:vol}\eqref{tiling:vol}. Conversely, if $\vol(C) = m 2^d$, then the equivalence \eqref{pack:vol=m} $\Leftrightarrow$ \eqref{tiling} of Theorem~\ref{thm:tiling:char} applied to the family $\cT(\frac{1}{2} C)$ yields that $\cT(\frac{1}{2} C)$ is an $m$-fold tiling.
\end{proof}

\section{Proofs of Theorem~\ref{thm:vdc:ineq:eqcase} and its consequences}

\label{sec:eq:case}

\begin{lemma}
	\label{lem:cover:xray}
	Let $m \in \N$ and $m \ge 2$ and let $\cT(K)$, with $K \in \cK^d$ , be an $m$-fold covering with the property that all collision vectors are multiples of $e_d$. Then $f_K(y) \ge m$ holds for every $y \in \intr{\pi(K)}$.
\end{lemma}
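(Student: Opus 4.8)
The plan is to localize to a single vertical line $\{y\} \times \R$ over a point $y \in \intr{\pi(K)}$ and to exploit that, under the hypothesis, members of $\cT(K)$ with distinct horizontal shifts cannot overlap. Write $I_K(y) = [a,b]$ and $\ell := f_K(y) = b-a$; since the fiber-length function is concave on $\pi(K)$ and positive at interior points, one has $\ell > 0$. I argue by contradiction: assuming $\ell < m$, I will exhibit a point of $\{y\} \times \R$ that is covered by fewer than $m$ members of $\cT(K)$.

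First I extract the geometric meaning of the hypothesis. A nonzero $z \in \Z^d$ is a collision vector exactly when $\intr{K} \cap (\intr{K}+z) \ne \emptyset$, i.e.\ when two members of $\cT(K)$ overlap in their interiors. Writing lattice vectors as $(w,k) \in \Z^{d-1} \times \Z$, a collision vector with $w \ne o$ would have nonzero image under $\pi$ and hence could not be a multiple of $e_d$; so the assumption forces the interiors of $K+(w,k)$ and $K+(w',k')$ to be disjoint whenever $w \ne w'$. In particular, once $(y,t) \in \intr{K+(0,k)}$ for some $k$, the point $(y,t)$ lies in the interior of no translate whose horizontal shift differs from $o$.

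The core of the argument is a count along the line. For the shift $w=o$ one has $(y,t) \in \intr{K+(0,k)}$ iff $t \in (a+k,\,b+k)$, so the number of such $k$ equals the number of integers in the open interval $(t-b,\,t-a)$ of length $\ell$. I choose a height $t^\ast$ that is generic in two senses: its distances $t^\ast - a$ and $t^\ast - b$ are non-integers, and $(y,t^\ast)$ lies on the boundary of no translate (the excluded heights form a locally finite, hence negligible, set, using boundedness of $K$ and periodicity of the count). Among such generic heights at which this $w=o$ count is positive, its least value is $\max\{\floor{\ell},1\}$. For a $t^\ast$ attaining it the $w=o$ band is active, so by the disjointness above every member of $\cT(K)$ containing $(y,t^\ast)$ has horizontal shift $o$, and genericity makes incidences with $K$ and with $\intr{K}$ coincide; hence $\mult(K,(y,t^\ast))$ equals this count.

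To conclude, note that if $\ell < m$ then $\max\{\floor{\ell},1\} \le m-1 < m$ (here $m \ge 2$ is used to absorb the value $1$), so $(y,t^\ast)$ is covered fewer than $m$ times, contradicting that $\cT(K)$ is an $m$-fold covering. Therefore $f_K(y) = \ell \ge m$. The one genuinely delicate point is the lattice-point bookkeeping for an open interval of length $\ell$ — namely that at a generic position its minimal positive count is $\floor{\ell}$ when $\ell \ge 1$ and $1$ when $0 < \ell < 1$ — together with checking that a height realizing this minimum can be kept off all translate boundaries simultaneously.
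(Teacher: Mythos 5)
Your overall strategy is the same as the paper's: restrict to the vertical line over $y$, use the hypothesis on collision vectors to rule out translates with nonzero horizontal shift, and count the remaining vertical translates hitting the fiber, which for $\ell<m$ and $m\ge 2$ can be made at most $m-1$. The counting itself (minimal positive generic count equal to $\max\{\floor{\ell},1\}\le m-1$) is fine. The gap is in the existence of your $t^\ast$: you require $(y,t^\ast)$ to lie on the boundary of \emph{no} translate and justify this by saying the excluded heights form a locally finite, hence negligible, set. That justification is false in general. Boundedness of $K$ only gives that finitely many translates meet a bounded portion of the line; it does not make each translate's contribution finite. If $z=(w,k)$ with $y-w\in\bd{\pi(K)}$, then the line $\{y\}\times\R$ meets $K+z$ in the segment $\{y\}\times(I_K(y-w)+k)$, every point of which is a boundary point of $K+z$ (the line misses $\intr{K+z}$ entirely). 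So the excluded set can contain nondegenerate intervals, and if some such fiber has length $\ge 1$ the union over $k$ covers the whole line, leaving no admissible $t^\ast$ at all. Ruling this configuration out under the lemma's hypotheses would require a separate argument that you have not given.

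The repair is exactly the observation the paper uses, and it makes your boundary-avoidance condition unnecessary. Since your $t^\ast$ lies in $\intr{K+z_0}$ for some $z_0=(0,k_0)$ (the ``active band''), any $z\in\Z^d$ with $(y,t^\ast)\in K+z$ --- boundary incidences included --- satisfies $z_0-z\in K-\intr{K}=\intr{K-K}$ by \eqref{int:sum}, so $z-z_0$ is either $o$ or a collision vector, hence a multiple of $e_d$; thus every translate containing $(y,t^\ast)$, closed or open, has horizontal shift $o$. The only boundary incidences left to exclude are those of the vertical translates themselves, and these are already handled by your requirement $t^\ast-a\notin\Z$, $t^\ast-b\notin\Z$. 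With this substitution your argument closes; as written, the step asserting that a fully boundary-avoiding $t^\ast$ exists is unsupported.
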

\begin{proof}
	Fix $y \in \intr{\pi(K)}$. Assume, to the contrary, that $f_K(y) < m$. Let $a$ and $b$, with $a<b$, be the endpoints of the segment $I_K(y)$. Since the length of $I_K(y)$ is strictly less than $m$,  the segment $I_K(y)$ can be split be split into two segments of lengths less than $1$ and less than $m-1$. That is, there exists $t$ satisfying $a < t < b$, $t -a < 1$ and $b- t < m-1$.  With this choice of $t$, one has $|I_K(y) \cap (t+\Z)|  = 1 + \floor{b-t} \le m-1$.
	
	We show that, for the point $x:=(y,t) \in \intr{K}$, one has $\mult(K,x) \le m-1$. For this, consider an arbitrary $z \in \Z^d \setminus \{o\}$ such that $x \in K + z$. We have $z \in K - x \subseteq K - \intr{K}$. By \eqref{int:sum}, $K-\intr{K}=\intr{K-K}$. Thus, $z$ is a collision vector. Hence $z$ is a multiple of $e_d$ and we can represent it as $z=(o,s)$ with $s \in \Z \setminus \{0\}$. Reformulating $x \in K+z$ as $x - z \in K$, and using $x=(y,t)$ and $z=(o,s)$, we arrive at $t-s \in I_K(y)$. Thus, $t-s \in (I_K(y) \cap (t+ \Z) ) \setminus \{t\}$. Hence, $t-s$ is one of at most $m-2$ values in  $I_K(y) \cap (t+\Z) \setminus \{t\}$. Consequently, apart from $K$, the point $x$ lies in at most $m-2$ other members of $\cT(K)$. This shows $\mult(K,x) \le m-1$. The latter contradicts the assumption that $\cT(K)$ is an $m$-fold covering. 
\end{proof}

\begin{lemma}
	\label{lem:pack:xray}
	Let $m \in \N$, let $K \subseteq \R^d$ be a convex body such that $\cT(K)$ is an $m$-fold packing. Then $f_K(y) \le m$ for every $y \in \intr{\pi(K)}$. 
\end{lemma}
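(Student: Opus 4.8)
The plan is to establish the bound pointwise: I fix $y \in \intr{\pi(K)}$ and show that assuming $f_K(y) > m$ produces a point lying in the interiors of at least $m+1$ translates of $K$, contradicting the packing property. Throughout I use the characterization recorded in Section~\ref{sect:background}, namely that $\cT(K)$ is an $m$-fold packing if and only if $\mult(\intr{K},x) \le m$ for every $x \in \R^d$. The guiding idea is that it suffices to count only the vertical translates $K + s e_d$, $s \in \Z$: any lower bound on the number of these particular translates containing a given point is automatically a lower bound on its full multiplicity, so a lower bound here already contradicts the packing hypothesis.

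First I would record the geometric fact that, for $y \in \intr{\pi(K)}$, the open vertical chord over $y$ lies in $\intr{K}$; more precisely, writing $I_K(y) = [a,b]$ with $b - a = f_K(y)$, I claim $I_{\intr{K}}(y) = (a,b)$. The inclusion $I_{\intr{K}}(y) \subseteq (a,b)$ is immediate, since $\intr{K}$ is open and contained in $K$. For the reverse inclusion I would use that $\pi$ is an open map to get $\pi(\intr{K}) = \intr{\pi(K)}$, so that some point $(y,s^\ast) \in \intr{K}$ exists, necessarily with $s^\ast \in (a,b)$; then, for any $t \in (a,b)$, the point $(y,t)$ lies on the open segment joining $(y,s^\ast) \in \intr{K}$ to one of the endpoints $(y,a)$ or $(y,b) \in K$, and an open segment from an interior point to a point of $K$ stays in $\intr{K}$. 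Hence $(y,t) \in \intr{K}$ for all $t \in (a,b)$, which proves the claim and, in particular, shows that $I_{\intr{K}}(y)$ is an open interval of length $f_K(y)$.

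With this in hand, the counting step is elementary. For $x = (y,t)$ with $t \in (a,b)$, every $s \in \Z$ with $t - s \in (a,b)$ yields $x \in \intr{K} + s e_d$, so $\mult(\intr{K}, x) \ge \card{\setcond{s \in \Z}{t - s \in (a,b)}} = \card{\Z \cap (t-b, t-a)}$, the number of integers in an open interval of length $f_K(y)$. Assuming $f_K(y) > m$, I would choose $t = b - \delta$ for a sufficiently small $\delta > 0$, so that $t \in (a,b)$ and $f_K(y) - \delta > m$; then the interval $(t-b,t-a) = (-\delta,\, f_K(y)-\delta)$ contains the $m+1$ integers $0,1,\ldots,m$, whence $\mult(\intr{K},(y,t)) \ge m+1$, contradicting the packing hypothesis. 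Therefore $f_K(y) \le m$. I expect the only delicate point to be the geometric claim that the open chord lies in $\intr{K}$ (equivalently $I_{\intr{K}}(y) = \intr{I_K(y)}$), which is exactly what forces the hypothesis $y \in \intr{\pi(K)}$; the remaining count is direct.
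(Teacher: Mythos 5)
Your proof is correct and follows essentially the same route as the paper: assume $f_K(y)>m$, pick $t$ near an endpoint of the chord so that $m+1$ integer vertical shifts of $(y,t)$ land in the open chord, and contradict the packing bound $\mult(\intr{K},\cdot)\le m$. The only difference is that you explicitly justify the (true, standard) fact that the open chord over $y\in\intr{\pi(K)}$ lies in $\intr{K}$, which the paper uses implicitly.
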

\begin{proof}
	Fix $y \in \intr{\pi(K)}$. Let $a$ and $b$, with $a<b$, be the endpoints of $I_K(y)$. Assume, $f_K(y) \le m$  is not true. Then $b- a > m$. Choosing $t \in (a,b)$ sufficiently close to $a$, we ensure that the set $(a,b) \cap (t+\Z)$ contains the $m+1$ values $t, \ldots, t+m$. Setting $x:=(y,t)$, we get $x + i e_d \in \intr{K}$ for $i \in \{0,\ldots,m\}$. Thus, in view of \eqref{mult:b}, $\mult(\intr{K},x) \ge m+1$. This is a contradiction to the assumption that $\cT(K)$ is an $m$-fold packing.
\end{proof}

\begin{lemma}
	\label{lem:cylinders}
	Let $C \in \cK^d$ be $o$-symmetric and let $B:=\pi(C)$. Assume that, for some constant $\lambda>0$, one has $f_C(y) = 2 \lambda$ for every $y \in \intr{B}$. Then $C = L(B,a-\lambda,a+\lambda)$ 
	for some linear function $a : \R^{d-1} \to \R$.
\end{lemma}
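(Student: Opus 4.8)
The plan is to describe $C$ through the bottom and top of its vertical slices over the base $B=\pi(C)$, show that the two bounding functions differ by the constant $2\lambda$ and hence share a common midpoint function, and then argue that this midpoint function is linear. Throughout I will use that $B=\pi(C)$ is a $(d-1)$-dimensional compact convex set, i.e.\ a convex body in $\R^{d-1}$, which is $o$-symmetric because $C$ is.

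First I would record the tautological representation $C=L(B,f,g)$, where for $y\in B$ I put $f(y):=\min I_C(y)$ and $g(y):=\max I_C(y)$; these exist since $C$ is compact and $I_C(y)$ is a closed segment by convexity. A routine convexity argument shows that $g$ is concave and $f$ is convex on $B$: for $y_0,y_1\in B$ and $\mu\in[0,1]$ the points $(y_0,g(y_0)),(y_1,g(y_1))$ lie in $C$, so their convex combination does too, forcing $g\bigl((1-\mu)y_0+\mu y_1\bigr)\ge(1-\mu)g(y_0)+\mu g(y_1)$; the case of $f$ is symmetric.

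Next I would introduce the midpoint function $a:=\frac12(f+g)$. On $\intr B$ the hypothesis $f_C(y)=g(y)-f(y)=2\lambda$ gives simultaneously $a=g-\lambda$ and $a=f+\lambda$, so $a$ is both concave and convex on the open convex set $\intr B$, hence affine there. The $o$-symmetry of $C$ yields $I_C(-y)=-I_C(y)$, so $g(-y)=-f(y)$ and therefore $a(-y)=-a(y)$; an odd affine function is linear, so $a(y)=\sprod{c}{y}$ on $\intr B$ for some $c\in\R^{d-1}$. Let $\tilde a(y):=\sprod{c}{y}$ denote its linear extension to all of $\R^{d-1}$.

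Finally I would identify $C$ with $C':=L(B,\tilde a-\lambda,\tilde a+\lambda)$. Since $\tilde a$ is affine and $B$ is a $(d-1)$-dimensional compact convex set, $C'$ is a convex body. Over every $y\in\intr B$ both $C$ and $C'$ have the identical slice $[\tilde a(y)-\lambda,\tilde a(y)+\lambda]$, and because $\tilde a$ is continuous this allows me to check that $\intr C$ and $\intr{C'}$ both equal $\setcond{(y,t)}{y\in\intr B,\ \tilde a(y)-\lambda<t<\tilde a(y)+\lambda}$; continuity produces, around any point of this set, a full neighborhood contained in the body, while conversely any interior point must project into $\intr B$ with $t$ strictly between the two bounds. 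Applying $\cl{\intr{K}}=K$ from \eqref{cl:int} to the two convex bodies then gives $C=\cl{\intr C}=\cl{\intr{C'}}=C'$, as desired. I expect the main obstacle to be the behavior over $\bd B$: concave and convex functions may be discontinuous on the relative boundary of $B$, so $f$ and $g$ need not equal $\tilde a\mp\lambda$ there and one cannot argue pointwise on all of $B$. Passing through the interiors and invoking \eqref{cl:int}, rather than comparing boundary values, is precisely the device that circumvents this difficulty.
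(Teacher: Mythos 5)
Your proof is correct, but it reaches the linear function $a$ by a different mechanism than the paper. The paper observes that $f_C(o)=2\lambda$ together with $o$-symmetry forces $\pm\lambda e_d\in\bd{C}$, takes a supporting hyperplane $H$ of $C$ at $\lambda e_d$, writes it as $t=a(y)+\lambda$ with $a$ linear (linearity coming for free because $H$ passes through $\lambda e_d$), notes that $-H$ supports $C$ at $-\lambda e_d$, and concludes $I_C(y)\subseteq[a(y)-\lambda,a(y)+\lambda]$ for $y\in\intr{B}$; equality of lengths then upgrades the inclusion to equality, and the passage through interiors and \eqref{cl:int} finishes as in your argument. You instead define $a$ intrinsically as the midpoint $\tfrac12(f+g)$ of the slice, derive its affinity on $\intr{B}$ from the simultaneous convexity and concavity of $f+\lambda=g-\lambda$, and obtain linearity from oddness. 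Both routes handle the boundary of $B$ the same way, via $\cl{\intr{\cdot}}$. The paper's argument is shorter, producing $a$ in one stroke from a single supporting hyperplane; yours is slightly more self-contained, since it identifies $a$ canonically and does not need to justify that the supporting hyperplane at $\lambda e_d$ is non-vertical (which the paper leaves implicit, though it follows from $o=\pi(\lambda e_d)\in\intr{B}$). Both are complete and correct.
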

\begin{proof}
	In view of \eqref{cl:int}, it suffices to verify the equality $\intr{C} = \intr{L(B,a-\lambda,a+\lambda)}$ for the interiors. Note also that 
	\begin{align*}
		\intr{C} & = \setcond{(y,t)}{y \in \intr{B}, \ t \in \intr{I_K(y)}},
		\\
		\intr{L(B,a-\lambda,a+\lambda)} & = \setcond{(y,t)}{y \in \intr{B}, \ a(y) - \lambda < t < a(y) + \lambda}.
	\end{align*}
	
	Since $f_C(o)=2\lambda$ and $C$ is $o$-symmetric, $\lambda e_d$ and $-\lambda e_d$ are boundary points of $C$. Fix a hyperplane $H$ supporting $C$ at $\lambda e_d$. The hyperplane $H$ can be described as the set of all $(y,t) \in \R^d$ satisfying $t=a(y)+\lambda$, for some linear function $a$. By the $o$-symmetry of $C$, the hyperplane $-H$ supports $C$ at $- \lambda e_d$. The hyperplane $-H$ is the set of all $(y,t) \in \R^d$ satisfying $t=a(y) - \lambda$. It follows that $I_K(y) \subseteq [a(y)- \lambda, a(y) + \lambda]$ for every $y \in \intr{B}$. Furthermore, since $[a(y) - \lambda, a(y) + \lambda]$ and $I_K(y)$ both have length $2 \lambda,$ we even have the equality $I_K(y) = [a(y) - \lambda,a(y)+\lambda]$ for every $y \in \intr{B}$. The latter implies $\intr{C}=\intr{L(B,a-\lambda,a+\lambda)}$ and by this $C=L(B,a-\lambda,a+\lambda)$.
\end{proof}

\begin{proof}[Proof of Theorem~\ref{thm:vdc:ineq:eqcase}] 
We first prove the sufficiency. Assume that $C$ is the image of $L(B,a-m,a+m))$ under a linear unimodular transformation $\phi$, where $B \in \cKo^d(1)$ is extremal and $a : \R^{d-1} \to \R$ is a linear function. Taking into account the fact that unimodular transformations preserve the volume, we get $\vol(C) = \vol(L(B,a-m,a+m))$. The volume of $L(B,a-m,a+m)$ can be computed by integration:
\begin{align}
	\label{eq:cyl:volume}
	\vol(L(B,a-m,a+m)) & = \int_B (a(y)+m)-(a(y)-m)) \dd y = 2 m \vol(B).
\end{align}
Since $B$ is a $(d-1)$-dimensional extremal body, $\vol(B) = 2^{d-1}$, and we arrive at $\vol(C)=m 2^d$. 

To prove the converse implication, assume $\vol(C)=m 2^d$.  Consider the family $\cT(\frac{1}{2} C)$. In view of   Theorem~\ref{thm:vdc:ineq}\eqref{vdc:modified}, $\dim(\Z^d \cap \intr{C})=1$. Changing coordinates by a linear unimodular transformations, we assume that all vectors in $\Z^d \cap \intr{C}$ are multiples of $e_d$. This means that $\Z^d \cap \intr{C}$ consists of the $2m-1$ vectors of the form $i e_d$ with $i \in \{-m+1,\ldots,m-1\}$. We fix $B = \pi(C)$.
	By Theorem~\ref{thm:vdc:ineq}\eqref{vdc:eq:tiling}, the equality $\vol(C) = m 2^d$ implies that $\cT(\frac{1}{2} C)$ is an $m$-fold tiling. By Lemmas~\ref{lem:cover:xray} and \ref{lem:pack:xray}, $f_{\frac{1}{2} C}(y) = m$ holds for every $y \in \intr{\frac{1}{2} B}$. Passing from $\frac{1}{2} C$ to $C$, the latter can be formulated as the equality $f_C(y) = 2 m$ for every $y \in \intr{B}$. By Lemma~\ref{lem:cylinders}, equality $C = L(B,a-m,a+m)$ holds for some linear function $a : \R^{d-1} \to \R$. It remains to show that $B$ is extremal. In view of \eqref{eq:cyl:volume}, equalities $\vol(C) = m 2^d$ and $C=L(B,a-m,a+m)$ imply $\vol(B) = 2^{d-1}$. If $B \in \cKo^{d-1}(1)$ was not true, then $\intr{B}$ would contain a point $z \in \Z^d \setminus \{o\}$. The segment $I_B(z)$ has length $2m > 1$. Consequently, $\intr{I_B(z)}$ contains an integer value $s$. We have thus constructed the non-zero lattice vector $(z,s) \in \intr{C}$, which is not a multiple of $e_d$. This is a contradiction. Thus, $B \in \cKo^{d-1}(1)$ and $\vol(B) = 2^{d-1}$, which means that $B$ is extremal.
\end{proof}

\begin{proof}[Proof of Corollary~\ref{cor:multiple:tilings}]
	A result of Gravin, Robins and Shiryaev \cite[Thm.~1.1]{MR3063154} implies that $K$ is a centrally symmetric. So, without loss of generality we can assume that $K$ is $o$-symmetric.

	\eqref{cor:mult:ineq}: The interior of $2K$ contains $2N+1$ lattice points. We thus get
	\begin{align*}
		m 2^d & = \vol(2 K) & & \text{(by Proposition~\ref{multiple:pack:cov:vol}\eqref{tiling:vol})}
		\\ & \le (N+1) 2^d & & \text{(by van der Corput's inequality)},
	\end{align*}
	which yields $m \le N+1$.

	\eqref{cor:mult:eq:case}: The characterization of the equality case $m=N+1$ is a straightforward consequence of Theorem~\ref{thm:vdc:ineq:eqcase} applied to $C=2K$.
\end{proof}

\begin{lemma}
	\label{lem:char:tiling}
	Let $m \in \N$. Let $\Lambda \subseteq \R^d$ be a lattice of rank $d$, let $K \in \cK^d$ and consider the family $\cT:=\setcond{K+z}{z \in \Lambda}$. Then $\cT$ is an $m$-fold tiling if and only if $\cT$ is an $m$-fold packing with $\vol(K)= m \det(\Lambda)$. 
\end{lemma}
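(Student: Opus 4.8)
The plan is to reduce the assertion, via a linear change of coordinates, to the already-established special case $\Lambda = \Z^d$ treated in Theorem~\ref{thm:tiling:char}. Since $\Lambda$ has rank $d$, I would write $\Lambda = A \Z^d$ for some $A \in \GL_d(\R)$ with $|\det A| = \det(\Lambda)$, and set $K' := A^{-1} K$. As the image of a convex body under an invertible linear map, $K'$ again belongs to $\cK^d$, and one has $\cT(K') = \setcond{K'+z}{z\in\Z^d} = A^{-1}\cT$; that is, $A$ carries the integer-lattice translates of $K'$ onto the members of $\cT$.

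Next I would record how the relevant quantities transform under $A$. For any $x \in \R^d$, the condition $x \in K+z$ with $z \in \Lambda$ is equivalent to $A^{-1}x \in K' + A^{-1}z$, and as $z$ ranges over $\Lambda$ the vector $A^{-1}z$ ranges over $\Z^d$. Hence the number of members of $\cT$ containing $x$ equals $\mult(K', A^{-1}x)$. Because $A^{-1}$ is a homeomorphism, $\intr{K'} = A^{-1}\intr{K}$, so the same bijection of indices shows that the number of members of $\setcond{\intr{K}+z}{z\in\Lambda}$ containing $x$ equals $\mult(\intr{K'}, A^{-1}x)$. Since $A^{-1}$ maps $\R^d$ onto $\R^d$, it follows that $\cT$ is an $m$-fold packing (respectively covering, tiling) if and only if $\cT(K')$ is. Finally, $\vol(K') = |\det A^{-1}|\,\vol(K) = \vol(K)/\det(\Lambda)$, so the condition $\vol(K) = m\det(\Lambda)$ is equivalent to $\vol(K') = m$.

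With these translations in hand the lemma is immediate: $\cT$ is an $m$-fold tiling if and only if $\cT(K')$ is an $m$-fold tiling, which by the equivalence \eqref{pack:vol=m} $\Leftrightarrow$ \eqref{tiling} of Theorem~\ref{thm:tiling:char} holds if and only if $\cT(K')$ is an $m$-fold packing with $\vol(K')=m$, and this in turn is equivalent to $\cT$ being an $m$-fold packing with $\vol(K)=m\det(\Lambda)$.

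I do not expect a serious obstacle here: the entire content is the observation that packings, coverings, tilings, and the normalized volume condition are all invariant under the linear isomorphism $A$ taking $\Z^d$ to $\Lambda$. The only point deserving care is the bookkeeping for the packing condition, where one must pass to interiors and use that $A^{-1}$ commutes with the operation $\intr{\,\cdot\,}$.
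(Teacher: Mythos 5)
Your proof is correct and takes essentially the same route as the paper: the paper simply declares the lemma ``a straightforward reformulation'' of the equivalence \eqref{pack:vol=m} $\Leftrightarrow$ \eqref{tiling} in Theorem~\ref{thm:tiling:char} for general rank-$d$ lattices, and your change of coordinates via $A$ with $\Lambda = A\Z^d$ is exactly the reduction being invoked. You have merely written out the bookkeeping (multiplicities, interiors, and the volume scaling $\vol(K') = \vol(K)/\det(\Lambda)$) that the paper leaves implicit.
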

\begin{proof}
	The assertion is a straightforward reformulation of the equivalence \eqref{pack:vol=m} $\Leftrightarrow$ \eqref{tiling} in Theorem~\ref{thm:tiling:char}  for the case of arbitrary rank $d$ lattices.
\end{proof}

\begin{lemma}
	\label{lem:m>=2}
	Let $m \in \N$ and let $\cT(K)$, with $K \in \cK^d$, be an $m$-fold tiling, for which the set of collision vectors is non-empty. Then $m \ge 2$. 
\end{lemma}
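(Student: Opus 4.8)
The plan is to exploit the very definition of a collision vector together with the packing half of the tiling condition; the argument will be essentially immediate once the right point is produced. By hypothesis the set of collision vectors is non-empty, so I would first fix a collision vector $z \in \Z^d \setminus \{o\}$. By the definition recalled in Section~\ref{sect:background}, this means $\intr{K} \cap (\intr{K} + z) \neq \emptyset$, so I can choose a witness point $x \in \intr{K} \cap (\intr{K}+z)$.

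Next I would count the multiplicity of $\intr{K}$ at this point. Using representation \eqref{mult:a} (or \eqref{mult:b}) for the set $A = \intr{K}$, the value $\mult(\intr{K},x)$ equals the number of $w \in \Z^d$ with $x \in \intr{K}+w$. The choice of $x$ exhibits two such vectors, namely $w=o$ (since $x \in \intr{K}$) and $w=z$ (since $x \in \intr{K}+z$), and these are distinct because $z \neq o$. Hence $\mult(\intr{K},x) \ge 2$.

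Finally I would invoke the packing condition. Since $\cT(K)$ is an $m$-fold tiling, it is in particular an $m$-fold packing, which by the characterization stated in Section~\ref{sect:background} means $\mult(\intr{K},y) \le m$ for every $y \in \R^d$. Applying this at $y = x$ gives $m \ge \mult(\intr{K},x) \ge 2$, as required.

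There is no real obstacle here: the statement is a direct consequence of unwinding the definitions of collision vector and of $m$-fold packing, so the only care needed is to work with the \emph{interiors} $\intr{K}$ (not the closed bodies) so that the packing inequality applies and so that the witness point genuinely lies in two distinct interior translates.
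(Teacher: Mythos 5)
Your proof is correct and is essentially identical to the paper's own argument: both fix a collision vector $z$, take a point $x$ in the non-empty intersection $\intr{K} \cap (\intr{K}+z)$ to conclude $\mult(\intr{K},x) \ge 2$, and then invoke the $m$-fold packing half of the tiling condition to get $m \ge 2$. Your version merely spells out the two distinct witnesses $w=o$ and $w=z$ that the paper leaves implicit.
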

\begin{proof}
	We fix any collision vector $z$. Clearly, $\mult(\intr{K},x) \ge 2$ for every $x \in \intr{K} \cap (\intr{K} + z)$, where the intersection of $\intr{K}$ and $\intr{K}+z$ is non-empty. Hence $m \ge 2$. 
\end{proof}

\begin{lemma}
	\label{lem:lifting:replication}
	Let $m \in \N$ and let $C \in \cKo^d$ be a cylindrical $m$-lifting of a $(d-1)$-dimensional extremal body. Then then $m$-fold tiling $\cT(\frac{1}{2} C)$ is a replication of a one-fold tiling. 
\end{lemma}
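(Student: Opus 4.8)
The plan is to produce an explicit rank-$d$ sublattice $\Lambda \subseteq \Z^d$ of index $m$ for which the family $\setcond{\tfrac12 C + z}{z \in \Lambda}$ is a one-fold tiling, and then to certify this one-fold-tiling property through the packing-plus-volume criterion of Lemma~\ref{lem:char:tiling}. Once such a $\Lambda$ is found, the conclusion is immediate: $\cT(\tfrac12 C)$ is an $m$-fold tiling (it satisfies $\vol(C)=m2^d$ and $C \in \cKo^d(2m-1)$ by the sufficiency already proved for Theorem~\ref{thm:vdc:ineq:eqcase}, so Theorem~\ref{thm:vdc:ineq}\eqref{vdc:eq:tiling} applies), and the existence of $\Lambda$ is exactly the definition of a replication.

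First I would reduce to the case where the linear unimodular transformation $\phi$ from the definition of the cylindrical $m$-lifting is the identity. Since $\phi$ is a volume-preserving linear bijection with $\phi(\Z^d)=\Z^d$, it maps $m$-fold tilings to $m$-fold tilings and sublattices of $\Z^d$ to sublattices of $\Z^d$ of the same index; moreover $\frac12 C + \phi(w) = \phi(\tfrac12 L + w)$ for $L:=L(B,a-m,a+m)$, so a replicating sublattice for $\tfrac12 L$ pushes forward to one for $\tfrac12 C$. Hence it suffices to treat $C = L(B,a-m,a+m)$ with $B \in \cKo^{d-1}$ extremal and $a$ linear. Using linearity of $a$ one checks $\tfrac12 C = L(\tfrac12 B,\, a-\tfrac m2,\, a+\tfrac m2)$, and \eqref{eq:cyl:volume} together with $\vol(B)=2^{d-1}$ gives $\vol(\tfrac12 C)=2^{-d}\vol(C)=m$.

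Next take $\Lambda := \Z^{d-1}\times m\Z$, a rank-$d$ sublattice of $\Z^d$ with $\det(\Lambda)=m$. The collision vectors of $\setcond{\tfrac12 C+z}{z\in\Lambda}$ are the nonzero elements of $\Lambda\cap\intr{\tfrac12 C-\tfrac12 C}$; as $C$ is $o$-symmetric one has $\tfrac12 C-\tfrac12 C = C$, so it remains to show $\Lambda\cap\intr{C}=\{o\}$. A point $(u,km)\in\Lambda$ lies in $\intr{C}=\setcond{(y,t)}{y\in\intr{B},\ a(y)-m<t<a(y)+m}$ only if $u\in\intr{B}\cap\Z^{d-1}$; extremality of $B$ (i.e.\ $B\in\cKo^{d-1}(1)$) forces $u=o$, whence $a(u)=0$ and the constraint $-m<km<m$ leaves only $k=0$. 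Thus $\Lambda$ has no nonzero collision vectors, so $\setcond{\tfrac12 C+z}{z\in\Lambda}$ is a one-fold packing, and since $\vol(\tfrac12 C)=m=1\cdot\det(\Lambda)$, Lemma~\ref{lem:char:tiling} upgrades it to a one-fold tiling.

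I do not anticipate a serious obstacle: after the correct sublattice $\Z^{d-1}\times m\Z$ is identified, the argument collapses to the two routine verifications $\vol(\tfrac12 C)=m$ and $\Lambda\cap\intr{C}=\{o\}$. The only points demanding a little care are the reduction removing $\phi$ and the identity $\tfrac12 C-\tfrac12 C=C$, both of which rely on $\phi$ being linear and $C$ being $o$-symmetric.
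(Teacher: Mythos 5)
Your proof is correct, and it rests on exactly the same construction as the paper's: after reducing to $C=L(B,a-m,a+m)$ via the unimodular map $\phi$, both arguments take the sublattice $\Lambda=\Z^{d-1}\times m\Z$. The only divergence is in how the one-fold-tiling property of $\setcond{\frac{1}{2}C+z}{z\in\Lambda}$ is certified: the paper stacks the translates $\frac{1}{2}C+ime_d$ to tile the cylinder $(\frac{1}{2}B)\times\R$ and then invokes the fact that $\cT(\frac{1}{2}B)$ is a one-fold tiling of $\R^{d-1}$, whereas you verify that $\Lambda\cap\intr{C}=\{o\}$ (no collision vectors, hence a packing) and that $\vol(\frac{1}{2}C)=m=\det(\Lambda)$, then apply Lemma~\ref{lem:char:tiling}. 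Your route is slightly more algebraic and arguably spells out details the paper leaves terse (including the justification that $\cT(\frac{1}{2}C)$ is indeed an $m$-fold tiling in the first place); both verifications are routine and equally valid.
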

\begin{proof}
	Without loss of generality we can assume that $C=L(B,a-m,a+m)$ for some extremal body $B \in \cKo^{d-1}(1)$ and a linear function $a: \R^{d-1} \to \R$.
	The set $(\frac{1}{2} C )\times \R$ can be decomposed into sets $\frac{1}{2} C+i m e_d$, with $i \in \Z$, that have disjoint interiors. Since $B$ is extremal, $\cT(\frac{1}{2} B)$ is a one-fold tiling. This implies that $\cT(\frac{1}{2} C)$ is a replication of the one-fold tiling $\setcond{\frac{1}{2} C + z}{z \in \Lambda}$ with $\Lambda = \Z^{d-1} \times m \Z$. 
\end{proof}

\begin{proof}[Proof of Corollary~\ref{cor:N<=2}]
	As in the proof of Corollary~\ref{cor:multiple:tilings}, without loss of generality we can assume that $K$ is $o$-symmetric. 
	
	If $N=0$, then Corollary~\ref{cor:multiple:tilings}\eqref{cor:mult:ineq} yields $m =1$. For one-fold tilings, the assertion is trivial. If $N=1$, then Corollary~\ref{cor:multiple:tilings}\eqref{cor:mult:ineq} yields $m \le 2$, while Lemma~\ref{lem:m>=2} yields $m \ge 2$. Thus, $m=2$ and $N=1$, which means that the equality $m=N+1$ holds. By Corollary~\ref{cor:multiple:tilings}\eqref{cor:mult:eq:case}, $2 K$ is a cylindrical $m$-lifting of a $(d-1)$-dimensional extremal convex body. Applying Lemma~\ref{lem:lifting:replication}, we get the desired assertion.

	If $N=2$, then Corollary~\ref{cor:multiple:tilings}\eqref{cor:mult:ineq} yields $m \le 3$, while Lemma~\ref{lem:m>=2} yields $m \ge 2$. Thus, we end up with two cases $N=2, m=2$ and $N=2, m=3$. For $N=2$, $m=3$, the equality $m=N+1$ holds, and so we can argue similarly to the case $N=1, m=2$ to verify the assertion. 
	
	It remains to consider the case $N=2$, $m=2$. We first show that the four collision vectors of $\cT(K)$ are not collinear. The set of all collision vectors can be expressed as $Z:= \intr{2 K} \cap \Z^d \setminus \{o\}$. Since $|Z| \le 4$, this shows that the convex hull $P$ of $Z$ is either a $o$-symmetric segment, with $P \cap \Z^d$ consisting of the four collinear collision vectors and the origin, or a $o$-symmetric parallelogram, with $P \cap \Z^d$ consisting of the four vertices of $P$ and the origin. If $P$ is a segment, we can assume that $P$ has endpoints $\pm 2 e_d$. Since $\cT(K)$ is a two-fold tiling,
	Lemmas~\ref{lem:cover:xray} and \ref{lem:pack:xray} imply that $f_K(y)=2$ holds for every $y \in \intr{\pi(K)}$. Lemma~\ref{lem:cylinders} shows that $K=L(B,a-1,a+1)$ for some linear function $a : \R^{d-1} \to \R$ and $B=\pi(C)$. The equality $K=L(B,a-1,a+1)$ contradicts the the fact $2 e_d$ is a collision vector of $K$, because $L(B,a-1,a+1)$ and $L(B,a-1,a+1)+ 2 e_d = L(B,a+1,a+3)$ do not have interior points in common. We have thus verified that $P$ is not a segment. 
	
	Consequently, $P$ is a parallelogram and, changing coordinates by a linear unimodular transformations, we can assume $\pm e_1, \pm e_2$ are the vertices of $P$. Consider the sub-lattice $\Lambda := \setcond{(z_1,\ldots,z_d) \in \Z^d}{z_1 + \cdots + z_d \ \text{even}}$ of $\Z^d$. It is easy to check that $\det(\Lambda)=2$. Since $\pm e_1$ and $\pm e_2$ are not in $\Lambda$, we see that $\setcond{K+z}{z \in \Lambda}$ is a packing. On the other hand,  $\cT(K)$ is a two-fold tiling, and so $\vol(K)=2$ holds, by Proposition~\ref{multiple:pack:cov:vol}\eqref{tiling:vol}. Thus, $\setcond{K+z}{z \in \Lambda}$ is a packing with $\vol(K)= \det(\Lambda)$. Lemma~\ref{lem:char:tiling} implies that $\setcond{K+ z}{z \in \Lambda}$ is a one-fold tiling.
\end{proof}

%\begin{remark}
%	\label{rem:class:problem}
%	We discuss Problem~\ref{class:problem} for $N=4$. Proposition~\ref{prop:N=>m}\eqref{N>=1} and Corollary~\ref{cor:multiple:tilings} show that $N=4$ implies $m \in \{2,3\}$. The case $N=4$ and $m=3$ is classified by Corollary~\ref{cor:multiple:tilings}, as in this case the equality $m = \frac{1}{2} N + 1$ holds. Fig.~\ref{fig:N2:m2} presents two examples with $N=4$ and $m=2$ in dimension two. Examples in higher dimensions can be constructed as follows. For $i \in \{1,2\}$, consider an $m_i$-fold lattice tiling $\cT(K_i)$ with $N_i$ collision vectors in dimension $d_i$, where $K_i \in \cK^{d_i}$. Then $\cT(K_1 \times K_2)$ is an $m$-fold lattice tiling with $N$ collision vectors, where $m=m_1 m_2$ and $N=(N_1+1)(N_2+1)-1$. Applying this construction for $d_1=2, m_1=2, N_1=4$ and $d_2= d-2, m_2=1, N_2=0$, we can generate examples of two-fold tilings with four collision vectors in dimension $d \ge 3$.
%\end{remark}

%\section{Interpretation in the theory of multiple lattice tilings}

%\label{sec:mult:lat:tal}

\bibliography{inputs/lit2}
\bibliographystyle{amsalpha}

\end{document}